
\documentclass[reqno, 12pt]{amsart}
\usepackage{amssymb}
\usepackage{amsfonts}
\usepackage{enumerate}
\usepackage{cite}
\usepackage{amsmath}
\usepackage{amsfonts,amssymb,latexsym}
\usepackage[OT4]{fontenc}
\usepackage[cp1250]{inputenc}
\usepackage{mathtools}
\usepackage{verbatim}
\usepackage{wasysym}

\setcounter{MaxMatrixCols}{10}

\textheight=23cm \textwidth=13.5cm \topmargin0cm \oddsidemargin1cm \evensidemargin1cm
\footskip1cm
\newtheorem{theorem}{Theorem}[section]
\newtheorem{lemma}[theorem]{Lemma}
\newtheorem{proposition}[theorem]{Proposition}
\newtheorem{corollary}[theorem]{Corollary}
\theoremstyle{definition}
\newtheorem{definition}[theorem]{Definition}

\theoremstyle{remark}
\newtheorem{remark}[theorem]{Remark}

\DeclareMathOperator{\diam}{diam}

\DeclareMathOperator{\Fix}{Fix}

\DeclarePairedDelimiter\floor{\lfloor}{\rfloor}

\begin{document}
\title[Uniform asymptotic regularity]{Applications of uniform asymptotic
regularity to fixed point theorems}
\author[S. Borzdy\'{n}ski]{S\l awomir Borzdy\'{n}ski}
\author[A. Wi\'{s}nicki]{Andrzej Wi\'{s}nicki}

\begin{abstract}
We show that there are no nontrivial surjective uniformly asymptotically
regular mappings acting on a metric space and derive some consequences of
this fact. In particular, we prove that a jointly continuous left amenable
or left reversible semigroup generated by firmly nonexpansive mappings on a
bounded $\tau $-compact subset of a Banach space has a common fixed point,
and give a qualitative complement to the Markov-Kakutani theorem.
\end{abstract}

\subjclass[2010]{Primary 47H10; Secondary 46B20, 47H09. }
\keywords{Asymptotic regularity, Fixed point, Firmly nonexpansive mapping,
Averaged mapping, Amenable semigroup, Weak topologies, Markov-Kakutani
theorem, Affine retraction.}
\address{S\l awomir Borzdy\'{n}ski, Institute of Mathematics, Maria Curie-Sk%
\l odowska University, 20-031 Lublin, Poland}
\email{slawomir.borzdynski@gmail.com}
\address{Andrzej Wi\'{s}nicki, Department of Mathematics, Rzesz\'{o}w
University of Technology, Al. Powsta\'{n}c\'{o}w Warszawy 12, 35-959 Rzesz%
\'{o}w, Poland}
\email{awisnicki@prz.edu.pl}
\maketitle

\section{Introduction}

The notion of asymptotic regularity was introduced by Browder and Petryshyn
in \cite{BrPe} in connection with the study of fixed points of nonexpansive
mappings. Recall that a mapping $T:M\rightarrow M$ acting on a metric space $%
(M,d)$ is asymptotically regular if
\begin{equation*}
\lim_{n\rightarrow \infty }d(T^{n}x,T^{n+1}x)=0
\end{equation*}%
for all $x\in M.$ Krasnoselskii \cite{Kra} proved that if $K$ is a compact
convex subset of a uniformly convex Banach space and if $T:K\rightarrow K$
is nonexpansive (i.e., $\Vert Tx-Ty\Vert \leq \Vert x-y\Vert $ for $x,y\in K$%
), then for any $x\in K$ the sequence $\{(\frac{1}{2}I+\frac{1}{2}T)^{n}x\}$
converges to a fixed point of $T.$ He used the fact that the averaged\
mapping $\frac{1}{2}(I+T)$ is asymptotically regular. Subsequently,
Ishikawa\ \cite{Is} proved that if $C$ is a bounded closed convex subset of
a Banach space $E$ and $T:C\rightarrow C$ is nonexpansive, then the mapping $%
T_{\alpha }=(1-\alpha )I+\alpha T$ is asymptotically regular for each $%
\alpha \in (0,1).$ Independently, Edelstein and O'Brien \cite{EdOb} showed
that $T_{\alpha }$ is uniformly asymptotically regular over $x\in C$, and
Goebel and Kirk \cite{GoKi3} proved that the convergence is uniform with
respect to all nonexpansive mappings from $C$ into $C$. Other examples of
asymptotically regular mappings are given by the result of Anzai and
Ishikawa \cite{AnIs} -- if $T$ is an affine mapping acting on a bounded
closed convex subset of a locally convex space $X$, then $T_{\alpha
}=(1-\alpha )I+\alpha T$ is uniformly asymptotically regular.

Another important class of nonlinear operators are firmly nonexpansive
mappings, i.e. mappings with the property
\begin{equation*}
\left\Vert {Tx-Ty}\right\Vert \leq \left\Vert {\alpha (x-y)+(1-\alpha
)(Tx-Ty)}\right\Vert
\end{equation*}%
for all $x,y\in C$ and $\alpha \in (0,1)$. Their origin is associated with
the study of maximal monotone operators in Hilbert spaces. Reich and Shafrir
\cite{ReSh} proved that every firmly nonexpansive mapping $T:C\rightarrow C$
is asymptotically regular provided $C$ is bounded. For a thorough treatment
of firmly nonexpansive mappings we refer the reader to \cite{ALM}.

Quite recently, Bader, Gelander and Monod \cite{BGM} proved a remarkable
fixed point theorem for affine isometries in $L$-embedded Banach spaces
preserving a bounded subset $A$ and showed its several applications. In
particular, they simplified the Losert's proof and simultaneously obtained
the optimal solution to the \textquotedblleft derivation
problem\textquotedblright\ studied since the 1960s. The present paper is
partly motivated by the Bader-Gelander-Monod theorem. The paper \cite{BrRe}
is also relevant at this point. Our basic observation is Proposition \ref%
{surjectiveImplId} -- there are no nontrivial surjective uniformly
asymptotically regular mappings. It appears to be quite useful in fixed
point theory of nonexpansive and affine mappings.

The well-known Day generalization of the Markov-Kakutani fixed point theorem
asserts that a semigroup $S$ is left amenable if and only if, whenever $S$
acts as affine continuous mappings on a nonempty compact convex subset $C$
of a Hausdorff locally convex space, $S$ has a common fixed point in $C$. In
1976, T.-M. Lau \cite{Lau1} (see also \cite[Question 1]{LaZh}) posed the
problem of characterizing left amenability of a semigroup $S$ in terms of
the fixed point property for nonexpansive mappings. Consider the following
fixed point property:

\begin{enumerate}
\item[$(F_{\ast })$:] \textit{Whenever $\mathcal{S}=\{T_{s}:s\in S\}$ is a
representation of $S$ as norm-nonexpansive mappings on a non-empty weak$%
^{\ast }$ compact convex set $C$ of a dual Banach space $E$ and the mapping $%
(s,x)\rightarrow T_{s}(x)$ from $S\times C$ to $C$ is jointly continuous,
where $C$ is equipped with the weak$^{\ast }$ topology of $E$, then there is
a common fixed point for }$\mathcal{S}$\textit{\ in $C$.}
\end{enumerate}

It is not difficult to show (see, e.g., \cite[p. 528]{LaTa2}) that property $%
(F_{\ast })$ implies that $S$ is left amenable. Whether the converse is true
is still an open problem though a few partial results are known. Lau and
Takahashi \cite[Theorem 5.3]{LaTa} proved that the answer is affirmative if $%
C$ is separable. Recently, the authors of the present paper were able to
prove that commutative semigroups have $(F_{\ast })$ property (see \cite[%
Theorem 3.6]{BoWi}). We show in Section 3 that a jointly continuous left
amenable or left reversible semigroup generated by firmly nonexpansive
mappings acting on a bounded $\tau $-compact subset of a Banach space has a
common fixed point. This is a partial result but the techniques developed in
this paper may lead to a solution of the original problem too.

In Section 4 we apply our techniques to the case of commutative semigroups
of affine mappings and give the qualitative complement to the
Markov-Kakutani theorem. In spite of its simplicity, the result seems to be
generally new even for the weak topology, see Remark \ref{weak}.

\section{Preliminaries}

Let $(M,d)$ be a metric space. In this paper we focus on uniformly
asymptotically regular self-mappings on $M$ (abbr. UAR), i.e., mappings $%
T:M\rightarrow M$ which satisfy the condition

\begin{equation*}
\lim_{n\rightarrow \infty }\sup_{x\in M}\ d({T^{n+1}x,T^{n}x)}=0.
\end{equation*}

The following observation is our basis for using uniform asymptotic
regularity to generate fixed points.

\begin{lemma}
\label{lemma1}A surjective mapping $T:M\rightarrow M$ with the property
\begin{equation*}
\inf_{n}\sup_{x\in M}\ d({T^{n+1}x,T^{n}x)}=0,
\end{equation*}%
is the identity mapping.
\end{lemma}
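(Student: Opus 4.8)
The plan is to prove the contrapositive-style statement directly: assuming surjectivity and the infimum condition, show that $d(Tx,x)=0$ for every $x\in M$. The infimum condition gives a sequence of exponents $n_k\to\infty$ (or at least infinitely many $n$) along which $\sup_{x}d(T^{n+1}x,T^nx)\to 0$; equivalently, for each $\varepsilon>0$ there exists $n$ with $d(T^{n+1}x,T^nx)<\varepsilon$ uniformly in $x$. The key idea is to combine this small one-step displacement at level $n$ with surjectivity to transport the estimate back down to level $0$.

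First I would fix $x\in M$ and $\varepsilon>0$ and pick $n$ with $\sup_{y\in M}d(T^{n+1}y,T^ny)<\varepsilon$. The crucial step is to exploit surjectivity: since $T$ is onto, so is $T^n$, hence there exists $z\in M$ with $T^nz=x$. Then $Tx=T^{n+1}z$, so
\begin{equation*}
d(Tx,x)=d(T^{n+1}z,T^nz)\le \sup_{y\in M}d(T^{n+1}y,T^ny)<\varepsilon.
\end{equation*}
Since $\varepsilon>0$ was arbitrary, $d(Tx,x)=0$, i.e. $Tx=x$. As $x$ was arbitrary, $T=\mathrm{id}_M$. This is the whole argument in essence; the infimum hypothesis is used precisely to guarantee the existence of such an $n$ for every prescribed $\varepsilon$.

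The step I expect to be the real content — and the only one requiring care — is the surjectivity transfer: one must observe that surjectivity of $T$ forces surjectivity of every iterate $T^n$, so that an arbitrary point $x$ can be written as $T^nz$ and the uniform smallness of $d(T^{n+1}y,T^ny)$ can be applied at the specific point $y=z$. Everything else is a routine $\varepsilon$-argument. It is worth noting that no continuity, linearity, or completeness is needed, and that the uniform supremum over $M$ is essential: without uniformity one could not guarantee that the chosen $z$ lands where the displacement is small. I would close by remarking that Lemma~\ref{lemma1} immediately applies to UAR mappings, since a UAR mapping satisfies $\lim_n\sup_x d(T^{n+1}x,T^nx)=0$ and hence a fortiori the infimum condition, which is the form in which the result will be used in the sequel.
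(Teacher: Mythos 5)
Your argument is correct and is essentially identical to the paper's proof: both use the infimum hypothesis to select an $n$ with uniformly small $n$-th step displacement, then use surjectivity of $T^{n}$ to write an arbitrary point as $T^{n}z$ and read off $d(Tx,x)<\varepsilon$. No gaps; the surjectivity-transfer step you highlight is exactly the step the paper relies on.
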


\begin{proof}
Take $\varepsilon >0$. The assured property gives us a positive integer $n$
such that%
\begin{equation*}
\forall _{x\in M}\ d({T^{n+1}x,T^{n}x)}<\varepsilon .
\end{equation*}%
Take arbitrary $y\in M$. From the surjectivity of $T,$ there exists $x\in M$
such that $T^{n}x=y$. Then the above inequality turns out to be $d({y,Ty)}%
<\varepsilon $. Since $\varepsilon >0$ is arbitrary, we have $Ty=y$ and
consequently, $T=Id$.
\end{proof}

\begin{proposition}
\label{surjectiveImplId}There are no nontrivial surjective UAR mappings.
\end{proposition}

Notice that uniformity assumption is necessary as $Tx=x^{2}$ defined on $%
[0,1]$ meets the demands of the above corollary, except it is merely
asymptotically regular and obviously not the identity.

Let $C$ be a convex subset of a Banach space $X$ and $\alpha \in (0,1).$ For
brevity and symmetry, we denote $1-\alpha $ by $\bar{\alpha}$. A mapping of
the form%
\begin{equation}
T=\alpha I+\bar{\alpha}S
\end{equation}%
is called ($\alpha $-)averaged nonexpansive (resp. affine) provided that $%
S:C\rightarrow C$ is nonexpansive (resp. affine). The term \textquotedblleft
averaged mapping\textquotedblright\ was coined in \cite{BBR}. Whenever a $%
\tau $-topology on $C$ is mentioned it is assumed to be Hausdorff. A set $C$
is called $T$-invariant if $T(C)\subset C$.

\section{Applications to nonexpansive mappings}

We start by giving a few statements which result in the uniform asymptotic
regularity. The first one was proved by Edelstein and O'Brien \cite[Theorem 1%
]{EdOb} (see also \cite[Theorem 2]{GoKi3}).

\begin{lemma}
\label{UAR}An averaged nonexpansive mapping $T:C\rightarrow C$ defined on a
convex and bounded subset $C$ of a Banach space is UAR.
\end{lemma}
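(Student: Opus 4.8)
The plan is to reduce the claim to an asymptotic regularity statement for the nonexpansive part $S$ and then to invoke the classical quantitative Krasnoselskii--Mann estimate, which is where the real work lies.

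Write $T=\alpha I+\bar{\alpha}S$ with $S:C\to C$ nonexpansive; since $C$ is convex it is $T$-invariant. Fix $x\in C$ and set $x_{n}=T^{n}x$. A direct computation gives $T^{n+1}x-T^{n}x=Tx_{n}-x_{n}=\bar{\alpha}(Sx_{n}-x_{n})$, so that
\[
\sup_{x\in C}\ \norm{T^{n+1}x-T^{n}x}=\bar{\alpha}\sup_{x\in C}\norm{Sx_{n}-x_{n}}.
\]
Thus $T$ is UAR if and only if $\sup_{x\in C}\norm{Sx_{n}-x_{n}}\to 0$, i.e. the iterates $x_{n}$ become approximate fixed points of $S$, uniformly in the starting point.

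First I would record a monotonicity that makes the relevant limit exist. Using $Sx_{n}-x_{n+1}=\alpha(Sx_{n}-x_{n})$ together with the nonexpansiveness of $S$ and $\norm{x_{n+1}-x_{n}}=\bar{\alpha}\norm{Sx_{n}-x_{n}}$, one gets
\[
\norm{Sx_{n+1}-x_{n+1}}\leq\norm{Sx_{n+1}-Sx_{n}}+\alpha\norm{Sx_{n}-x_{n}}\leq\bar{\alpha}\norm{Sx_{n}-x_{n}}+\alpha\norm{Sx_{n}-x_{n}}.
\]
Hence $b_{n}:=\norm{Sx_{n}-x_{n}}$ is nonincreasing, so $b_{n}$ converges for each $x$; the remaining task is to show the limit is $0$ and that the decay is uniform in $x$ (and, in the spirit of Goebel--Kirk, in $S$).

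The crux, and the main obstacle, is a quantitative $O(1/\sqrt{n})$ estimate depending only on $\diam C$ and $\alpha$. The mechanism is to compare the iterate $x_{n}$ with the orbit points $S^{k}x_{0}$: when $S$ is affine one has the exact binomial expansion $x_{n}=\sum_{k=0}^{n}\binom{n}{k}\alpha^{n-k}\bar{\alpha}^{k}S^{k}x_{0}$, and for merely nonexpansive $S$ the nonexpansiveness yields one-sided inequalities that play the same role. Bounding the resulting binomially weighted combination of increments by the finite quantity $\diam C$, using that the mean absolute deviation of a $\mathrm{Binomial}(n,\bar{\alpha})$ variable is of order $\sqrt{n\alpha\bar{\alpha}}$ (Stirling or a local central limit estimate), and invoking the monotonicity of $b_{n}$, forces
\[
\sup_{x\in C}\norm{Sx_{n}-x_{n}}\leq\frac{\diam C}{\sqrt{\pi\,\alpha\bar{\alpha}\,n}}\,(1+o(1)).
\]
Since this bound is independent of $x$ and of $S$, it simultaneously yields the UAR of $T$ and the uniformity over nonexpansive mappings. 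I expect the combinatorial inequality linking the nonlinear iterate $x_{n}$ to the points $S^{k}x_{0}$, and the asymptotics of the binomial weights, to be the delicate technical step; the reduction and the monotonicity are routine bookkeeping.
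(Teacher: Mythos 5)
Your reduction $\norm{T^{n+1}x-T^{n}x}=\bar{\alpha}\norm{Sx_{n}-x_{n}}$ and the monotonicity of $b_{n}=\norm{Sx_{n}-x_{n}}$ are both correct, but the step you yourself call the crux is precisely where the argument has a hole, and the mechanism you propose for it does not transfer from the affine to the nonexpansive setting. When $S$ is affine, $T^{n}x$ genuinely equals the binomial average $\sum_{k=0}^{n}\binom{n}{k}\alpha^{n-k}\bar{\alpha}^{k}S^{k}x$, and the mean-absolute-deviation asymptotics of the binomial weights give the $O(1/\sqrt{n})$ decay; this is exactly the computation the paper carries out later, in the proof of Lemma \ref{avg_lin_asym}, for averaged \emph{affine} maps. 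For a merely nonexpansive $S$ no such representation exists, and the ``one-sided inequalities that play the same role'' are never written down; there is no elementary comparison of $x_{n}$ with the orbit $\{S^{k}x\}$ that is known to produce your estimate. Indeed, the bound $\sup_{x\in C}\norm{Sx_{n}-x_{n}}\leq \diam C/\sqrt{\pi\alpha\bar{\alpha}n}$ that you write down is the Baillon--Bruck theorem, which remained a conjecture for years after the qualitative statement was settled and whose proof is a substantial combinatorial argument in its own right; it cannot be labelled ``the delicate technical step'' and then simply asserted.

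For the record, the paper does not prove this lemma at all: it quotes it from Edelstein--O'Brien \cite{EdOb} (see also \cite{GoKi3}). Those proofs take a genuinely different route from yours: with $d_{j}=\norm{Sx_{j}-x_{j}}$, one establishes by induction a recursive lower bound for $\norm{Sx_{i+n}-x_{i}}$ of the order $(1+n\alpha)d_{i+n}$, up to an error controlled by $(1-\alpha)^{-n}(d_{i}-d_{i+n})$, which is small for large $i$ by the very monotonicity you proved; if the $d_{n}$ stayed uniformly bounded away from zero this would force $\norm{Sx_{i+n}-x_{i}}>\diam C$, a contradiction. If you want a self-contained proof you should reproduce an argument of that type (or restrict to the affine case, where your binomial computation is complete); as it stands your proposal settles the affine case and reduces the nonexpansive case to an unproved, and very deep, quantitative estimate.
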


Recall that in a Hilbert space a mapping $T:C\rightarrow C$ is firmly
nonexpansive iff $T=\frac{1}{2}(I+S)$ for a nonexpansive mapping $S.$ In
general, there is no such relation. However, a counterpart of Lemma \ref{UAR}
can be drawn for firmly nonexpansive mappings by combining a method of \cite[%
Theorem 9.4]{GoKi} with \cite[Theorem 1]{ReSh}.

\begin{lemma}
\label{firmlyImpliesUniformly}A firmly nonexpansive mapping $T:C\rightarrow
C $ defined on a bounded subset of a Banach space $E$ is UAR.
\end{lemma}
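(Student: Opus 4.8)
The plan is to produce an $x$-independent rate of asymptotic regularity, so that no compactness of $C$ is needed (only boundedness is assumed). Fix $x\in C$, write $x_n=T^nx$ and $\delta_n(x)=\norm{x_{n+1}-x_n}$, and set $R_n=\sup_{x\in C}\delta_n(x)$; the goal is $R_n\to 0$. Two ingredients combine: a monotonicity of the displacements $\delta_n(x)$ in $n$, which I establish directly from firm nonexpansiveness, and the displacement estimate underlying the Reich--Shafrir theorem \cite{ReSh}, which I combine with the Goebel--Kirk technique \cite{GoKi} to make the rate uniform.

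First I would record the monotonicity. Applying the defining inequality of firm nonexpansiveness to the pair $(x_n,x_{n-1})$ and using $Tx_n=x_{n+1}$, $Tx_{n-1}=x_n$ gives, for every $\alpha\in(0,1)$,
\[
\delta_n(x)=\norm{x_{n+1}-x_n}\le \norm{\alpha(x_n-x_{n-1})+(1-\alpha)(x_{n+1}-x_n)}\le \alpha\,\delta_{n-1}(x)+(1-\alpha)\,\delta_n(x),
\]
whence $\alpha\,\delta_n(x)\le\alpha\,\delta_{n-1}(x)$ and therefore $\delta_n(x)\le\delta_{n-1}(x)$. Thus each orbit has nonincreasing displacements, and consequently $R_n$ is nonincreasing and converges to $R:=\inf_nR_n\ge 0$.

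The crux is to show $R=0$, and here the mere monotonicity together with the pointwise convergence $\delta_n(x)\to 0$ (Reich--Shafrir) is not enough, since $C$ is only bounded and Dini's theorem is unavailable. The natural target, following the method of \cite[Theorem 9.4]{GoKi}, is the $x$-free displacement estimate $\norm{x_{n+1}-x_n}\le \frac{1}{n+1}\,\norm{x_{n+1}-x_0}$, whose right-hand side is at most $\frac{\diam C}{\,n+1\,}$; this would give $R_n\le \frac{\diam C}{n+1}\to 0$ at once. To reach it one first extracts, from the inequality above taken as $\alpha\to 0^+$, a norming functional $\phi_n$ of $x_{n+1}-x_n$ with $\phi_n(x_n-x_{n-1})\ge \norm{x_{n+1}-x_n}$; the task is then to propagate this alignment of consecutive increments along the whole orbit so as to bound the telescoped sum $x_{n+1}-x_0=\sum_{k=0}^{n}(x_{k+1}-x_k)$ from below by $(n+1)\,\delta_n(x)$.

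I expect this propagation to be the main obstacle. Unlike the Hilbert-space or averaged situation of Lemma \ref{UAR}, where $T=\tfrac12(I+S)$ with $S$ nonexpansive lets one use the combinatorial averaging estimate of \cite{GoKi} directly, in a general Banach space firm nonexpansiveness controls only neighbouring increments, and carrying the comparison across non-consecutive indices is precisely the content supplied by \cite[Theorem 1]{ReSh}. Once the uniform rate $R_n\to 0$ is in hand, the conclusion that $T$ is UAR is immediate.
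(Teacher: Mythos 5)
Your opening steps are sound: the monotonicity $\delta_n(x)\leq\delta_{n-1}(x)$ follows exactly as you derive it, and you are right that pointwise asymptotic regularity (Reich--Shafrir) plus monotonicity cannot be upgraded to uniformity by a Dini-type argument on a merely bounded set. But the proof has a genuine gap precisely where you say you expect one: the entire content of the lemma is the $x$-free estimate $\Vert x_{n+1}-x_n\Vert\leq\frac{1}{n+1}\Vert x_{n+1}-x_0\Vert$ (or any $x$-free rate), and you do not establish it. The plan of extracting norming functionals $\phi_n$ aligned with consecutive increments and then ``propagating this alignment along the whole orbit'' is exactly the hard part of the Reich--Shafrir analysis, and in a general Banach space the firm-nonexpansiveness inequality only couples neighbouring increments, as you note yourself. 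Declaring the propagation to be ``the main obstacle'' and stopping there means the lemma is not proved; moreover it is not clear that the clean bound $\delta_n(x)\leq\frac{1}{n+1}\Vert x_{n+1}-x_0\Vert$ holds for every $n$ in every Banach space --- Reich--Shafrir's Theorem 1 is an asymptotic identity $\lim_n\delta_n(x)=\lim_n\Vert T^nx-x\Vert/n$, not a per-$n$ inequality, so even the target you aim at would need its own proof.

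The paper's argument avoids all of this by a lifting trick (this is what the reference to the method of \cite[Theorem 9.4]{GoKi} means): one works in the Banach space $E^{\star}$ of bounded $E$-valued functions on $C$ with the sup norm, takes the bounded subset $C^{\star}=C^{C}$, and considers the composition operator $\Psi_{T}S=T\circ S$. A one-line computation followed by taking suprema shows that $\Psi_{T}$ is itself firmly nonexpansive on the bounded set $C^{\star}$, so Reich--Shafrir applies to $\Psi_{T}$ and gives plain (pointwise) asymptotic regularity of $\Psi_{T}$ at the single point $S=\mathrm{Id}_{C}$. Since $\Psi_{T}^{n}\,\mathrm{Id}_{C}=T^{n}$ and the norm of $E^{\star}$ is the supremum over $x\in C$, the statement $\lim_{n}\Vert\Psi_{T}^{n+1}\mathrm{Id}_{C}-\Psi_{T}^{n}\mathrm{Id}_{C}\Vert=0$ \emph{is} the statement $\lim_{n}\sup_{x\in C}\Vert T^{n+1}x-T^{n}x\Vert=0$. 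In other words, the uniformity is obtained for free from the choice of norm on the lifted space, and Reich--Shafrir is used as a black box rather than reproved with a rate. If you want to salvage your approach, you would have to carry out the alignment/propagation argument in full; the lifting route is the intended (and much shorter) one.
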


\begin{proof}
Let $C$ be a bounded subset of $E$ and put
\begin{equation*}
E^{\star }=\{S\in \,E^{C}:\left\Vert {S}\right\Vert <\infty \},
\end{equation*}%
where
\begin{equation*}
\left\Vert {S}\right\Vert =\sup \{\left\Vert Sx\right\Vert :x\in C\}
\end{equation*}%
is the uniform norm. Let $C^{\star }=C^{C}.$ It is known, that $E^{\star }$
is a Banach space. Also, $C^{\star }$ inherits boundedness from $C$:%
\begin{equation*}
\forall _{T\in C^{\star }}\sup_{x\in C}\left\Vert {Tx}\right\Vert \leq
M\implies \sup_{T\in C^{\star }}\left\Vert {T}\right\Vert \leq M.
\end{equation*}%
For $T\in C^{\star }$ define $\Psi _{T}:C^{\star }\rightarrow C^{\star }$ by
$\Psi _{T}\,S=T\circ S$. If $T$ is firmly nonexpansive then%
\begin{equation*}
\left\Vert {(\Psi _{T}S-\Psi _{T}N)x}\right\Vert =\left\Vert {T(Sx)-T(Nx)}%
\right\Vert \leq \left\Vert {(\alpha (S-N)+\bar{\alpha}(\Psi _{T}S-\Psi
_{T}N))x}\right\Vert
\end{equation*}%
and, by taking suprema, we conclude that $\Psi _{T}$ is also firmly
nonexpansive.

Now, from \cite[Theorem 1]{ReSh} we have%
\begin{equation*}
\lim_{n}\left\Vert {\Psi _{T}^{n+1}S-\Psi _{T}^{n}S}\right\Vert
=\lim_{n}\left\Vert {\frac{\Psi _{T}^{n}S}{n}}\right\Vert =0,
\end{equation*}%
where the last equality follows from boundedness of $C^{\star }$. Taking $S$
as the identity on $C$, and unwiding definitions of $\left\Vert {\cdot }%
\right\Vert $ and $\Psi _{T}$, we get%
\begin{equation*}
\lim_{n}\sup_{x\in C}\left\Vert {T^{n+1}x-T^{n}x}\right\Vert =0
\end{equation*}%
which is the desired conclusion.
\end{proof}

Now from the ingredients above we have instantly the following result.

\begin{theorem}
\label{fixNonEmpty} If for a firmly nonexpansive (resp. averaged
nonexpansive) mapping $T:C\rightarrow C$ defined on a subset (resp. convex
subset) of a Banach space there exists a nonempty bounded set $D\subset C$
such $T(D)=D$, then $D\subset Fix\,T$. In particular, $\Fix T$ is nonempty.
\end{theorem}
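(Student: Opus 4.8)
The plan is to reduce Theorem~\ref{fixNonEmpty} to Proposition~\ref{surjectiveImplId} by restricting the given mapping to the invariant set $D$ and exploiting the fact that firm (resp.\ averaged) nonexpansiveness passes to subsets. First I would observe that the hypothesis $T(D)=D$ says precisely that the restriction $T|_D : D \to D$ is a well-defined \emph{surjective} self-mapping of $D$. The key point is that the UAR property is inherited: since $D$ is a nonempty bounded set (and, in the averaged case, I would also need $D$ to be convex, or else work with $\overline{\conv}\, D$ intersected appropriately), Lemma~\ref{firmlyImpliesUniformly} (resp.\ Lemma~\ref{UAR}) applies directly to $T|_D$ to show it is UAR as a self-map of the metric space $D$.

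Once $T|_D$ is known to be a surjective UAR mapping on the metric space $D$, Proposition~\ref{surjectiveImplId} forces $T|_D = \operatorname{Id}_D$. This means $Tx = x$ for every $x \in D$, which is exactly the assertion $D \subset \Fix T$. Since $D$ is assumed nonempty, $\Fix T$ is nonempty, giving the final clause.

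The main subtlety I anticipate is confirming that the applicable UAR lemma genuinely transfers to the restricted domain $D$ without additional hypotheses. For the firmly nonexpansive case this is clean: a firmly nonexpansive mapping on $C$ restricts to a firmly nonexpansive mapping on any invariant bounded subset, so Lemma~\ref{firmlyImpliesUniformly} applies to $T|_D$ with $D$ in place of the ambient bounded set. For the averaged nonexpansive case, however, $T = \alpha I + \bar\alpha S$ with $S$ nonexpansive, and Lemma~\ref{UAR} requires the domain to be \emph{convex}; I would need to verify that $D$, or a suitable convex set carrying the dynamics, satisfies this. The cleanest route is to note that $T$ is UAR on the whole convex bounded set $C$ by Lemma~\ref{UAR}, hence the uniform estimate $\sup_{x\in C} d(T^{n+1}x, T^n x) \to 0$ holds \emph{a fortiori} when the supremum is taken over the smaller set $D$, so that $T|_D$ satisfies the hypothesis of Lemma~\ref{lemma1} directly.

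With that observation the argument is essentially immediate: whether one invokes the full Proposition~\ref{surjectiveImplId} on $D$ or simply applies Lemma~\ref{lemma1} to the surjective map $T|_D : D \to D$ using the inherited infimum-supremum condition, the conclusion $T|_D = \operatorname{Id}_D$ follows, and hence $D \subset \Fix T$. So the expected obstacle is purely a matter of correctly matching the convexity/boundedness hypotheses of the two UAR lemmas to the restricted domain, rather than any deep difficulty; the heavy lifting has already been done in Lemmas~\ref{UAR} and~\ref{firmlyImpliesUniformly} and in Proposition~\ref{surjectiveImplId}.
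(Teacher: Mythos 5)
Your proposal is correct and is essentially the paper's own argument: the paper gives no written proof beyond ``from the ingredients above we have instantly the following result,'' meaning exactly your combination of Lemma~\ref{firmlyImpliesUniformly} (resp.\ Lemma~\ref{UAR}) with Proposition~\ref{surjectiveImplId} (equivalently Lemma~\ref{lemma1}) applied to the surjection $T|_D:D\to D$. The convexity/boundedness mismatch you flag in the averaged case (Lemma~\ref{UAR} needs a convex \emph{bounded} domain, while the theorem only declares $D$ bounded) is a looseness in the theorem's formulation that the paper's ``instant'' proof glosses over just as you do, and your resolution --- take the supremum over $D\subset C$ and invoke Lemma~\ref{lemma1} --- is the intended one.
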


\begin{corollary}
Suppose $C$ is a closed convex subset of a Banach space and $T:C\rightarrow C
$ is nonexpansive. If for some $\alpha \in (0,1)$, the mapping $F=(I-\alpha
\,T)^{-1}\bar{\alpha}I$ is surjective on a (nonempty) bounded set $D\subset C$, then $T$
has a fixed point.
\end{corollary}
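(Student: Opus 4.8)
The plan is to reduce this corollary to Theorem~\ref{fixNonEmpty} by identifying the averaged nonexpansive mapping whose invariant set is governed by the surjectivity hypothesis on $F$. First I would recognize that the map $T_\alpha = \bar\alpha I + \alpha T$ is an $\alpha$-averaged nonexpansive mapping (with $S = T$ in the notation $T_\alpha = (1-\alpha)I + \alpha T$), hence by Lemma~\ref{UAR} it is UAR on any convex bounded set. The key algebraic observation is that $F = (I - \alpha T)^{-1}\bar\alpha I$ is precisely a partial inverse of $T_\alpha$: a point $y$ satisfies $T_\alpha x = y$ exactly when $\bar\alpha x + \alpha T x = y$, i.e. $(I - \alpha T)x = \bar\alpha \cdot \tfrac{1}{\bar\alpha} \cdots$; more carefully, solving $T_\alpha x = \bar\alpha z$ for $x$ in terms of $z$ yields $x = (I - \alpha T)^{-1}\bar\alpha z = F z$. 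So $F$ is the inverse branch of $T_\alpha$, and surjectivity of $F$ on $D$ translates directly into $T_\alpha$ mapping some bounded set onto $D$.

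The next step is to make this translation precise and extract a bounded set $D'$ with $T_\alpha(D') = D$. Since $F$ is surjective on $D$, every point of $D$ is an image $F(z)$ for some $z$; equivalently, $T_\alpha(F(z)) = z$, so the range $F(D)$ is a bounded set sent onto $D$ by $T_\alpha$. I would set $D' = F(D)$ (noting it is bounded because $C$ and $D$ are bounded, or by passing to a bounded invariant set), observe $T_\alpha(D') = D$, and then look for an actual invariant set. The cleanest route is to verify that $F(D) = D$ under the surjectivity assumption, so that $T_\alpha(D) = D$; then Theorem~\ref{fixNonEmpty}, applied to the averaged nonexpansive mapping $T_\alpha$ on the convex bounded set, yields $D \subset \Fix\,T_\alpha$. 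Finally, $\Fix\,T_\alpha = \Fix\,T$ since $\bar\alpha x + \alpha T x = x$ is equivalent to $\alpha(Tx - x) = 0$, and $\alpha \neq 0$ forces $Tx = x$; this gives the desired fixed point of $T$.

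The main obstacle I anticipate is the bookkeeping around the invariance condition: the corollary only asserts that $F$ is \emph{surjective} onto $D$, not that $D$ is $F$-invariant or $T_\alpha$-invariant, so I must argue carefully that the surjectivity hypothesis genuinely produces a set $D$ with $T_\alpha(D) = D$ (equal as sets, in the sense required by Theorem~\ref{fixNonEmpty}), rather than merely a surjection between two different sets. The resolution is that surjectivity of $F$ on $D$ means $F(D) \supseteq D$, and since $T_\alpha \circ F = \mathrm{id}$ on the relevant domain, applying $T_\alpha$ gives $D = T_\alpha(F(D)) \supseteq T_\alpha(D)$, while the reverse inclusion needs the fact that $T_\alpha$ maps $D$ into the bounded invariant region; pinning down exactly which set to feed into Theorem~\ref{fixNonEmpty} is where the care is required. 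A secondary point worth checking is that $(I - \alpha T)^{-1}$ is well defined on the relevant set, which follows from $T$ being nonexpansive and $\alpha \in (0,1)$ making $\alpha T$ a strict contraction factor, so $I - \alpha T$ is a bijection onto its range.
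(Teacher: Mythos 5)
There is a genuine gap, and it is at the very center of your argument: the identification of $F=(I-\alpha T)^{-1}\bar{\alpha}I$ with an inverse branch of the averaged map $T_{\alpha}=\bar{\alpha}I+\alpha T$ is simply false. Unwinding the definition, $x=Fz$ means $(I-\alpha T)x=\bar{\alpha}z$, i.e. $x=\bar{\alpha}z+\alpha Tx$ (note: $Tx$, not $Tz$) --- this is the \emph{resolvent} equation, whereas $T_{\alpha}x=\bar{\alpha}z$ would read $\bar{\alpha}x+\alpha Tx=\bar{\alpha}z$, a different equation. Concretely, $T_{\alpha}(Fz)=\bar{\alpha}Fz+\alpha T(Fz)=(1+\bar{\alpha})Fz-\bar{\alpha}z\neq z$ in general, so your identity $T_{\alpha}\circ F=\mathrm{id}$ fails, and with it the translation of ``$F$ surjective on $D$'' into ``$T_{\alpha}$ maps some bounded set onto $D$.'' Indeed the actual inverse of $F$ is $\frac{1}{\bar{\alpha}}(I-\alpha T)$, an affine combination with a negative coefficient, so it is not an averaged nonexpansive map and no reduction to the averaged branch of Theorem~\ref{fixNonEmpty} via Lemma~\ref{UAR} is available along these lines.

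The intended (and much shorter) route uses the \emph{other} branch of Theorem~\ref{fixNonEmpty}: the resolvent $F$ is itself firmly nonexpansive (a standard fact, see Goebel--Kirk, p.~122), and $\Fix F=\Fix T$, since $Fz=z$ iff $z-\alpha Tz=\bar{\alpha}z$ iff $Tz=z$. The hypothesis that $F$ is surjective on the bounded set $D$ means $F(D)=D$, so Theorem~\ref{fixNonEmpty} applied directly to the firmly nonexpansive mapping $F$ gives $D\subset\Fix F=\Fix T$. Your closing remarks are sound on the periphery --- $(I-\alpha T)^{-1}$ is well defined on $C$ closed convex because $u\mapsto\bar{\alpha}z+\alpha Tu$ is a strict contraction of $C$ into itself, and $\Fix T_{\alpha}=\Fix T$ is correct --- but the proof cannot be completed without the firm nonexpansiveness of $F$, which your proposal never invokes.
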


\begin{proof}
It follows from the fact that $F$ is firmly nonexpansive (see \cite[p. 122]%
{GoKi}) and $\Fix F=\Fix T.$
\end{proof}

Below we use our technique to obtain fixed point theorems for semigroups of
uniformly asymptotically regular mappings. But first let us introduce some
notions.

\begin{definition}
\label{representation} We say that $\mathcal{S}=\left\{ T_{s}:s\in S\right\}
$ is a representation of $S$ on a topological space $(C,\tau )$ if for each $%
s\in S,$ $T_{s}$ is a mapping from $C$ into $C$ and $T_{st}x=T_{s}(T_{t}x)$
for every $s,t\in S$ and $x\in C$.
\end{definition}

Since, due to Lemma \ref{lemma1}, surjectivity is intertwined into all
results of this paper, the following definition seems natural:

\begin{definition}
We say that the representation $\mathcal{S}$ on the space $C$ is
subsurjective if there is a (nonempty) set $D\subset C$ such that every $%
T\in \mathcal{S}$ is surjective on $D$.
\end{definition}

There is a fundamental example of such representations.

\begin{lemma}
\label{commutingSubsu}Every commuting semigroup of continuous mappings
represented on a compact space is subsurjective on it.
\end{lemma}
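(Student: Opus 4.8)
The plan is to produce a \emph{minimal} nonempty compact invariant set by Zorn's lemma and then exploit commutativity to upgrade invariance to surjectivity. Write $\mathcal{S}=\{T_{s}:s\in S\}$ and let $\mathcal{F}$ denote the collection of all nonempty closed (hence compact) subsets $D\subseteq C$ that are invariant under the whole semigroup, i.e. $T(D)\subseteq D$ for every $T\in\mathcal{S}$. Since $C$ itself lies in $\mathcal{F}$, the family is nonempty; I would order it by reverse inclusion.

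First I would verify that Zorn's lemma applies. Given a chain $\{D_{i}\}$ in $\mathcal{F}$, its intersection $D_{\infty}=\bigcap_{i}D_{i}$ is closed, and it is nonempty because the $D_{i}$ form a chain of nonempty closed subsets of the compact space $C$ and therefore enjoy the finite intersection property. Moreover $D_{\infty}$ stays invariant, since $T(D_{\infty})\subseteq\bigcap_{i}T(D_{i})\subseteq\bigcap_{i}D_{i}=D_{\infty}$ for every $T\in\mathcal{S}$. Thus every chain has an upper bound in $\mathcal{F}$, and Zorn's lemma furnishes a minimal element $D\in\mathcal{F}$.

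The key step is to show that minimality forces each $T\in\mathcal{S}$ to satisfy $T(D)=D$. Fix $T\in\mathcal{S}$ and consider $T(D)$: as the continuous image of a compact set it is compact, hence closed in the (Hausdorff) compact space, and nonempty, and $T(D)\subseteq D$ because $D\in\mathcal{F}$. I claim $T(D)$ is itself invariant. Indeed, for any $T'\in\mathcal{S}$, commutativity of the semigroup gives $T'(T(D))=T(T'(D))\subseteq T(D)$, using $T'(D)\subseteq D$. Hence $T(D)\in\mathcal{F}$ with $T(D)\subseteq D$, so minimality of $D$ collapses this inclusion to $T(D)=D$. As $T$ was arbitrary, $D$ witnesses that $\mathcal{S}$ is subsurjective on $C$.

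The decisive point is the passage from invariance to equality, and this is exactly where commutativity enters: it guarantees that the image $T(D)$ is again invariant under the \emph{entire} semigroup, so that it competes with $D$ inside $\mathcal{F}$. I expect the main obstacle to be the topological bookkeeping in the upper-bound step --- confirming via the finite intersection property that a chain of nonempty closed invariant sets meets in a nonempty invariant set --- together with the tacit use of compactness (in the Hausdorff setting natural to the applications of this paper) to ensure that $T(D)$ is closed and hence a legitimate member of $\mathcal{F}$.
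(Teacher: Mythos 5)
Your proposal is correct and follows exactly the paper's argument: a Zorn's lemma extraction of a minimal nonempty compact $\mathcal{S}$-invariant set $D$, followed by the observation that commutativity makes $T(D)$ an invariant compact competitor, whence $T(D)=D$ by minimality. You simply spell out the chain/finite-intersection-property verification that the paper leaves implicit.
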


\begin{proof}
Using the Kuratowski-Zorn lemma, we get a minimal compact $\mathcal{S}$%
-invariant set $D$. Now, picking any $T\in \mathcal{S}$, we notice that for
every $P\in \mathcal{S}$,
\begin{equation*}
P(D)\subset D\implies TP(D)\subset T(D)\implies PT(D)\subset T(D),
\end{equation*}%
meaning that $T(D)$ is $\mathcal{S}$-invariant. It is also compact so, by
the minimality of $D$, $T(D)=D$.
\end{proof}

Let $S$ be a semitopological semigroup, i.e., a semigroup with a Hausdorff
topology such that the mappings $S\ni s\rightarrow ts$ and $S\ni
s\rightarrow st$ are continuous for each $t\in S.$ Notice that every
semigroup can be equipped with the discrete topology and then it is called a
discrete semigroup. The semigroup $S$ is said to be left reversible if any
two closed right ideals of $S$ have a non-void intersection. In this case $%
(S,\leq )$ is a directed set with the relation $a\leq b$ iff $\left\{
a\right\} \cup \overline{aS}\supset \left\{ b\right\} \cup \overline{bS}.$

Let $\ell ^{\infty }(S)$ be the Banach space of bounded real-valued
functions on $S$ with the supremum norm. For $s\in S$ and $f\in \ell
^{\infty }(S),$ we define the element $l_{s}f$ in $\ell ^{\infty }(S)$ by%
\begin{equation*}
l_{s}f(t)=f(st)
\end{equation*}%
for every $t\in S.$ Denote by $C_{b}(S)$ the closed subalgebra of $\ell
^{\infty }(S)$ consisting of continuous functions and let $LUC(S)$ be the
space of bounded left uniformly continuous functions on $S,$ i.e., all $f\in
C_{b}(S)$ such that the mapping $S\ni s\rightarrow l_{s}f$ from $S$ to $%
C_{b}(S)$ is continuous when $C_{b}(S)$ has the sup norm topology. Note that
if $S$ is a topological group, then $LUC(S)$ is equivalently the space of
bounded right uniformly continuous functions on $S$ (see \cite{HeRo}). A
semigroup $S$ is called left amenable if there exists a left invariant mean
on $LUC(S)$, i.e., a functional $\mu \in LUC(S)^{\ast }$ such that $%
\left\Vert \mu \right\Vert =\mu (I_{S})=1$ and%
\begin{equation*}
\mu (l_{s}f)=\mu (f)
\end{equation*}%
for each $s\in S$ and $f\in LUC(S).$ If $S$ is discrete and left amenable
then $S$ is left reversible. In general there is no such relation (see \cite[%
p. 335]{HoLa}).

Let us introduce one more temporary definition.

\begin{definition}
If in the markings as stated in the Definition \ref{representation}, $%
\mathcal{S}$ is a left reversible or left amenable semitopological semigroup
and the mapping
\begin{equation*}
S\times C\ni (s,\,x)\rightarrow T_{s}x
\end{equation*}%
is jointly continuous, then we say that the semigroup is ($\tau $-)properly
represented.
\end{definition}

Since it is known that, notably, commuting semigroups are left amenable we
can now conveniently state a generalization of Lemma \ref{commutingSubsu}.

\begin{lemma}
\label{propRep} Semigroups of mappings, properly represented on a compact
space are subsurjective.
\end{lemma}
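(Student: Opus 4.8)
The plan is to treat the two cases of the definition of a proper representation separately, since left amenability and left reversibility call for genuinely different tools. In each case the goal is to produce a nonempty $D\subseteq C$ with $T_s(D)=D$ for every $s\in S$; here invariance ($T_s(D)\subseteq D$) is the easy half and surjectivity ($D\subseteq T_s(D)$) the substantive one. Throughout I would use that joint continuity makes each $T_s$ continuous, so that $T_s(C)$ is compact, hence closed, in $C$.

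For the \emph{left reversible} case I would abandon the commutativity trick of Lemma~\ref{commutingSubsu} and instead work with the family $\{T_s(C):s\in S\}$. First I would show this family is downward directed: given $s,t$, left reversibility supplies a point $w$ in the intersection of the closed right ideals generated by $s$ and by $t$; writing $w$ as a limit of elements of $\{s\}\cup sS$ and using joint continuity together with the closedness of $T_s(C)$, I obtain $T_w(C)\subseteq T_s(C)$, and symmetrically $T_w(C)\subseteq T_t(C)$. Hence $D:=\bigcap_{s\in S}T_s(C)$ is nonempty and compact by the finite intersection property. I would then prove that for each $t$ the set $D_t:=\bigcap_{r\in tS}T_r(C)$ equals $D$: the inclusion $D\subseteq D_t$ is immediate, and for $D_t\subseteq T_{s_0}(C)$ I would again pick $w$ in the intersection of the closed right ideals of $t$ and $s_0$, approximate $w$ by a net $w_\beta\in\{t\}\cup tS$, note $D_t\subseteq T_{w_\beta}(C)$, and, for a given $y\in D_t$, pass a net of preimages $c_\beta\in C$ with $T_{w_\beta}c_\beta=y$ to a convergent subnet, using joint continuity and compactness of $C$ to conclude $y=T_w(c)\in T_w(C)\subseteq T_{s_0}(C)$. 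From $D_t=D$, invariance follows since $T_t(D)\subseteq\bigcap_s T_{ts}(C)=D_t=D$, while surjectivity follows by the same finite-intersection argument applied to the compact sets $T_t^{-1}(y)\cap T_u(C)$ ($u\in S$), each nonempty because $y\in D\subseteq T_{tu}(C)$, whose intersection $T_t^{-1}(y)\cap D$ is therefore nonempty.

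For the \emph{left amenable} case I would instead manufacture an invariant measure. Fixing $x_0\in C$ and a left invariant mean $\mu$ on $LUC(S)$, I would define $\nu(f)=\mu(F_f)$ for $f\in C(C)$, where $F_f(s)=f(T_s x_0)$; the crucial preliminary step is checking $F_f\in LUC(S)$, which follows from joint continuity of the action and compactness of $C$ (continuity in $s$ uniform over the compact variable). Since $F_{f\circ T_t}=l_t F_f$, left invariance of $\mu$ gives $\int f\circ T_t\,d\nu=\int f\,d\nu$, so by the Riesz representation theorem $\nu$ is a Radon probability measure with $(T_t)_*\nu=\nu$. I would then take $D=\operatorname{supp}\nu$, which is nonempty, and invoke the standard identity $\operatorname{supp}((T_t)_*\nu)=\overline{T_t(\operatorname{supp}\nu)}$; combined with $(T_t)_*\nu=\nu$ this yields $\overline{T_t(D)}=D$, and since $T_t(D)$ is compact, hence closed, we conclude $T_t(D)=D$.

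I expect the main obstacle to be the left reversible case, specifically the gap between \emph{closed} right ideals meeting (all that reversibility supplies) and an actual common right multiple (which commutativity supplied in Lemma~\ref{commutingSubsu}). Bridging this gap is precisely where joint continuity and compactness of $C$ are used: one only ever obtains a limit point $w$ of right multiples, and each time $w$ is invoked it must be approached by a net $w_\beta$ of genuine right multiples while simultaneously extracting a convergent subnet of the associated preimages in $C$. Keeping these two limiting processes compatible, so that the continuous action carries the limits where they are needed, is the delicate bookkeeping of the argument.
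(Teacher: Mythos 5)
Your proposal is correct, but it takes a different route from the paper in the literal sense that the paper does not argue at all: its entire proof is a citation of \cite[Lemma 3.4]{LaZh} for the left reversible case and \cite[Lemma 5.1]{LaTa} for the left amenable case, plus the remark that the resulting set is compact. What you have done is reconstruct self-contained proofs of essentially those two cited facts. Your left reversible argument (the downward directed family $\{T_s(C)\}_{s\in S}$, with left reversibility bridged to actual inclusions of images via nets of genuine right multiples, joint continuity, and closedness of the compact images) and your left amenable argument (pushing the invariant mean to an invariant Radon probability measure $\nu$ via $f\mapsto\mu(F_f)$, after checking $F_f\in LUC(S)$ using compactness of the orbit closure, and taking $D=\operatorname{supp}\nu$ with $\operatorname{supp}((T_t)_*\nu)=T_t(\operatorname{supp}\nu)$) are both sound; I checked the delicate points you flagged --- the compatibility of the net $w_\beta$ with the net of preimages $c_\beta$, the nonemptiness and finite intersection property of the sets $T_t^{-1}(y)\cap T_u(C)$, and the uniformity over the compact variable needed for $F_f\in LUC(S)$ --- and they all go through. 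What the paper's approach buys is brevity and reliance on established results; what yours buys is a self-contained lemma and, as a byproduct, an explicit description of $D$ in each case. One small point worth making explicit if you write this up: the paper's proof records that $D$ is compact because this is used later (in Lemma \ref{CFP} and Corollary \ref{observation}); your $D$ is compact in both cases (a directed intersection of compact sets, respectively the support of a Radon measure on a compact space), so you lose nothing, but you should state it.
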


\begin{proof}
Just use \cite[Lemma 3.4]{LaZh} in the case of left reversible semigroups
and \cite[Lemma 5.1]{LaTa} for left amenable ones. Note also that in both
cases the resulting set is compact.
\end{proof}

Now, after we presented some decent examples of subsurjectivity, we
formulate the main result of this section.

\begin{theorem}
\label{uar_fixed} Suppose $(M,d)$ is a metric space. Then a subsurjective
semigroup $\mathcal{S}$ generated by UAR mappings on $M$ has a common fixed
point.
\end{theorem}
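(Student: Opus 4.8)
The plan is to leverage Lemma \ref{lemma1} together with the definition of subsurjectivity. By hypothesis there is a nonempty set $D\subset M$ on which every generator of $\mathcal{S}$ is surjective. First I would observe that the collection of mappings that are surjective on $D$ is closed under composition: if $T$ and $U$ both satisfy $T(D)=D$ and $U(D)=D$, then $(T\circ U)(D)=T(U(D))=T(D)=D$. Hence every element of the semigroup $\mathcal{S}$, being a composition of generators, is surjective on $D$, and in particular maps $D$ onto $D$.

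The next step is to fix an arbitrary element $T\in\mathcal{S}$ and consider its restriction to $D$. Since $T$ is UAR on $M$, we have
\begin{equation*}
\lim_{n\rightarrow\infty}\sup_{x\in M}d(T^{n+1}x,T^{n}x)=0,
\end{equation*}
and restricting the supremum to the smaller set $D\subset M$ can only make it smaller, so
\begin{equation*}
\lim_{n\rightarrow\infty}\sup_{x\in D}d(T^{n+1}x,T^{n}x)=0,
\end{equation*}
which in particular forces $\inf_{n}\sup_{x\in D}d(T^{n+1}x,T^{n}x)=0$. Moreover, since $T(D)=D$, all the iterates $T^{n}$ map $D$ into $D$, so $T$ restricts to a genuine self-map $T|_{D}:D\rightarrow D$ that is surjective on $D$. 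Applying Lemma \ref{lemma1} to the metric space $(D,d)$ and the mapping $T|_{D}$, I conclude that $T|_{D}$ is the identity on $D$, i.e. $Tx=x$ for every $x\in D$.

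Because the element $T\in\mathcal{S}$ was arbitrary, every element of $\mathcal{S}$ fixes every point of $D$. Since $D$ is nonempty, any point of $D$ is a common fixed point of $\mathcal{S}$, which is the desired conclusion. I expect the argument to be essentially immediate once the two observations above are in place; the only point requiring a little care is the one I flagged, namely verifying that the restriction $T|_{D}$ is well defined as a self-map of $D$ and remains surjective on $D$, so that Lemma \ref{lemma1} genuinely applies to the pair $(D,T|_{D})$ rather than only to the ambient space $M$. The boundedness or compactness of $D$ supplied by the subsurjectivity examples (Lemmas \ref{commutingSubsu} and \ref{propRep}) plays no role here; the UAR hypothesis already does all the work, and the main conceptual obstacle is simply recognizing that subsurjectivity is precisely the hypothesis tailored to feed Lemma \ref{lemma1}.
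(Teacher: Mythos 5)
Your argument is essentially the paper's own proof: subsurjectivity hands you a set $D$ with $T(D)=D$, and Lemma \ref{lemma1} applied to the restriction $T|_{D}$ (the paper packages this as Proposition \ref{surjectiveImplId}) forces $T$ to be the identity on $D$. Your care in checking that $T|_{D}$ is a well-defined surjective self-map of $D$ and that the supremum only shrinks when restricted to $D$ is exactly the right justification.

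One slip to repair: in the second paragraph you fix an \emph{arbitrary} $T\in\mathcal{S}$ and assert ``since $T$ is UAR on $M$,'' but the theorem only assumes the \emph{generators} are UAR, and a composition of UAR mappings need not be UAR. The fix is already contained in your own first paragraph: run the Lemma \ref{lemma1} argument only for the generators (each of which is UAR and, being an element of $\mathcal{S}$, surjective on $D$ by the definition of subsurjectivity), conclude that each generator is the identity on $D$, and then observe that every element of $\mathcal{S}$ is a composition of generators and hence also the identity on $D$. With that reordering the proof is complete and matches the paper's.
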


\begin{proof}
It follows from subsurjectivity that there exists $D\subset M$ such that $%
T_{s}(D)=D$ for $s\in S$. Now, Proposition \ref{surjectiveImplId} yields
that every generator of $\mathcal{S}$ is the identity on $D$. It follows
that $T_{s}x=x$ for every $x\in D$ and $s\in S$.
\end{proof}

Combining the above theorem with Lemma \ref{firmlyImpliesUniformly} we
obtain the following corollary.

\begin{corollary}
\label{propFirmlyImplFP} Let $\mathcal{S}$ be a subsurjective semigroup on a
bounded subset $C$ of a Banach space whose generators consist of firmly
nonexpansive mappings. Then $\mathcal{S}$ has a common fixed point.
\end{corollary}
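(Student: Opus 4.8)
The plan is to treat this as a direct synthesis of the two facts already assembled in this section: Lemma \ref{firmlyImpliesUniformly}, which upgrades firm nonexpansiveness to uniform asymptotic regularity on bounded sets, and Theorem \ref{uar_fixed}, which produces common fixed points for subsurjective semigroups of UAR mappings. The only thing to verify is that the hypotheses of the latter theorem are met once we regard $C$ as a metric space.

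First I would equip $C$ with the metric $d(x,y)=\norm{x-y}$ inherited from the norm of the ambient Banach space, so that $(C,d)$ is a metric space and $C$ is bounded in the sense required by Lemma \ref{firmlyImpliesUniformly}. Since every generator of $\mathcal{S}$ is by hypothesis firmly nonexpansive on the bounded set $C$, that lemma applies to each generator individually and shows that each is UAR. Hence $\mathcal{S}$ is a semigroup on $(C,d)$ generated by UAR mappings.

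Second, because $\mathcal{S}$ is assumed subsurjective, all hypotheses of Theorem \ref{uar_fixed} are now in force, and a direct appeal to that theorem furnishes a common fixed point of $\mathcal{S}$ in $C$, which is exactly the assertion. The argument is a straightforward assembly of earlier results, so no serious obstacle arises; the single point deserving attention is that Lemma \ref{firmlyImpliesUniformly} requires only boundedness of $C$ (no convexity or closedness), which guarantees that the passage from ``generated by firmly nonexpansive mappings'' to ``generated by UAR mappings'' is legitimate under precisely the stated hypotheses.
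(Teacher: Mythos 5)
Your proposal is correct and follows exactly the paper's route: the paper obtains this corollary precisely by combining Lemma \ref{firmlyImpliesUniformly} (each firmly nonexpansive generator on the bounded set $C$ is UAR) with Theorem \ref{uar_fixed} (a subsurjective semigroup generated by UAR mappings on a metric space has a common fixed point). Your added remark that only boundedness of $C$ is needed for the lemma is a fair observation but does not change the argument.
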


Note that a special case of the above corollary gives a partial answer to
the problem of A. T.-M. Lau \cite[Question 1]{LaZh} (as described in the
Introduction) for semigroups generated by firmly nonexpansive mappings.

\begin{corollary}
Let $\mathcal{S}=\{T_{s}:s\in S\}$ be a representation of a left amenable
semigroup on $w^{\ast }$-compact subset $C$ of a Banach space generated by
firmly nonexpansive mappings such the mapping $S\times C\ni
(s,\,x)\rightarrow T_{s}x$ is jointly continuous. Then $\mathcal{S}$ has a
common fixed point.
\end{corollary}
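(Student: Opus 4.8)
The plan is to obtain this statement as an essentially immediate combination of Lemma~\ref{propRep} and Corollary~\ref{propFirmlyImplFP}, the only genuine work being to check that the hypotheses of those two results hold simultaneously. First I would record that, since $C$ is weak$^{*}$-compact in the dual Banach space $E$, it is in particular weak$^{*}$-bounded; by the uniform boundedness principle applied on the predual, $C$ is then norm-bounded. This is the step that unlocks the firmly nonexpansive machinery, since Lemma~\ref{firmlyImpliesUniformly}, and hence Corollary~\ref{propFirmlyImplFP}, requires $C$ to be bounded.

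Next I would recognise the representation as a properly represented semigroup on the compact space $(C,w^{*})$. Indeed, $S$ is left amenable, so it is a semitopological semigroup admitting a left invariant mean on $LUC(S)$, and the action $(s,x)\mapsto T_{s}x$ is assumed jointly continuous with $C$ carrying its weak$^{*}$ topology; these are exactly the conditions in the definition of a properly represented semigroup. Since $(C,w^{*})$ is compact, Lemma~\ref{propRep} applies and furnishes a nonempty set $D\subset C$ on which every $T\in\mathcal{S}$ is surjective, that is, the representation is subsurjective.

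With subsurjectivity and boundedness both in hand, the conclusion follows directly from Corollary~\ref{propFirmlyImplFP}: the generators are firmly nonexpansive mappings on the bounded set $C$, so each is UAR by Lemma~\ref{firmlyImpliesUniformly}, and Theorem~\ref{uar_fixed} then forces $T_{s}x=x$ for every $s\in S$ and every $x$ in the set $D$ produced above, yielding a common fixed point.

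The only place where care is needed---and the closest thing to an obstacle---is the compatibility of the two topologies involved. The subsurjectivity supplied by Lemma~\ref{propRep} is produced in the weak$^{*}$ topology (one needs compactness of $(C,w^{*})$ and weak$^{*}$-continuity of the action to run the Lau--Zhang / Lau--Takahashi argument), whereas the firm nonexpansiveness and the resulting uniform asymptotic regularity are norm-theoretic. The argument goes through precisely because these ingredients interact only via the purely set-theoretic condition $T_{s}(D)=D$: once Lemma~\ref{propRep} has manufactured such a $D$, Proposition~\ref{surjectiveImplId} (through Corollary~\ref{propFirmlyImplFP}) is applied in the norm topology, and no further interplay between the topologies is required.
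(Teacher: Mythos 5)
Your proof is correct and follows exactly the route the paper intends: the corollary is stated there without proof as a special case of Corollary~\ref{propFirmlyImplFP}, obtained by noting that the left amenable, jointly continuous action on the $w^{*}$-compact (hence norm-bounded) set $C$ is properly represented, so Lemma~\ref{propRep} gives subsurjectivity. Your explicit remarks on norm-boundedness via uniform boundedness and on the harmless mixing of the weak$^{*}$ and norm topologies are exactly the right points to check, and nothing further is needed.
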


Assuming proper representability instead of just subsurjectivity, we get the
result stronger than Theorem \ref{uar_fixed}.

\begin{lemma}
\label{CFP}Let $\mathcal{S}$ be a properly represented semigroup generated
by UAR mappings defined on a metric space $M$. Then every compact $\mathcal{S%
}$-invariant subset of $M$ contains a fixed point of $\mathcal{S}$.
\end{lemma}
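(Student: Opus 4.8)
The plan is to localize the machinery of Theorem \ref{uar_fixed} to the given compact invariant set, exploiting the fact that Lemma \ref{propRep} produces a subsurjective set living inside whatever compact space we feed it. So I would begin by fixing an arbitrary compact $\mathcal{S}$-invariant subset $K\subset M$ and checking that the representation restricts cleanly to $K$. Since $K$ is $\mathcal{S}$-invariant we have $T_s(K)\subset K$ for every $s\in S$, so each $T_s$ is a self-map of $K$; the joint continuity of $(s,x)\mapsto T_s x$ restricts to $S\times K$, while left amenability (resp.\ left reversibility) is a property of the abstract semigroup $S$ and is therefore untouched. Hence $\mathcal{S}|_K$ is again a properly represented semigroup, now acting on the \emph{compact} space $K$. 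Moreover each generator, being UAR on all of $M$, stays UAR on the smaller set $K$, because $\sup_{x\in K}d(T^{n+1}x,T^nx)\le\sup_{x\in M}d(T^{n+1}x,T^nx)\to 0$.

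Next I would apply Lemma \ref{propRep} to the compact space $K$. This yields a nonempty (and compact) set $D\subset K$ on which every $T_s$ is surjective, i.e.\ $T_s(D)=D$ for all $s\in S$; the crucial point is that $D$ sits inside $K$, which is exactly what running the argument on $K$ rather than on $M$ secures.

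Finally I would invoke Theorem \ref{uar_fixed} (equivalently, Proposition \ref{surjectiveImplId} applied to each generator) to the subsurjective semigroup $\mathcal{S}|_K$: each UAR generator, being surjective on $D$, must be the identity on $D$, whence every $T_s$ fixes every point of $D$. Thus any $x\in D$ is a common fixed point of $\mathcal{S}$, and since $D\subset K$ this fixed point lies in $K$, as required.

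The argument is short because all the real work is packaged into Lemma \ref{propRep} and Proposition \ref{surjectiveImplId}; the only step demanding genuine care is the verification that proper representability descends to the restriction $\mathcal{S}|_K$---specifically that the subsurjective set furnished by Lemma \ref{propRep} can be taken \emph{inside} $K$. This localization is precisely what upgrades the bare existence statement of Theorem \ref{uar_fixed} (``$\mathcal{S}$ has a fixed point'') to the present stronger assertion that every compact invariant set already contains one.
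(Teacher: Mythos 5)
Your proof is correct and follows exactly the route the paper intends (the lemma is stated there without proof, as an immediate combination of Lemma \ref{propRep} and Theorem \ref{uar_fixed}): restrict the representation to the compact invariant set $K$, note that proper representability and the UAR property of the generators descend to $K$, obtain a subsurjective set $D\subset K$, and conclude via Proposition \ref{surjectiveImplId}. Your explicit check that the subsurjective set can be taken inside $K$ is precisely the point that upgrades Theorem \ref{uar_fixed} to this localized statement.
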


\section{Applications to affine mappings}

Recall that a mapping $T:C\rightarrow C$ is said to be affine if
\begin{equation*}
T(\alpha x+\beta y)=\alpha Tx+\beta Ty
\end{equation*}
for every $x,y\in C$ and $\alpha ,\beta \geq 0,\alpha +\beta =1$.

The famous Markov-Kakutani theorem states that if $C$ is a nonempty compact
and convex subset of a (Hausdorff) linear topological space $X$ and $\left\{
T_{i}\right\} _{i\in I}$ a commutative family of continuous affine mappings
of $C$ into $C$, then there exists a common fixed point $x\in C$: $T_{i}x=x$
for every $i\in I$. Furthermore, $\bigcap_{i\in I}\Fix T_{i}$ is compact and
convex, and thus is an absolute retract provided $C$ is a metrizable subset
of a locally convex space (see, e.g., \cite[Theorem 7.7.5]{GrDu}). In
particular, then, there exists a (continuous) retraction $r:C\rightarrow
\bigcap_{i\in I}\Fix T_{i}.$ A natural question arises whether we can also
obtain an affine retraction onto $\bigcap_{i\in I}\Fix T_{i}$. To the best
of our knowledge only very partial results regarding almost periodic actions
are in the literature (see \cite{Ph}). In this section we prove a general
result of this kind. To this end we need the following lemma, see \cite[%
Lemma 1]{AnIs}, \cite[Theorem 3]{HoKu}.

\begin{lemma}
\label{avg_lin_asym}An averaged affine self-map defined on a convex and
bounded subset of a Banach space is UAR.
\end{lemma}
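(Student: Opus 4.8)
The plan is to reduce the whole statement to one sharp estimate on the binomial distribution. Write $T=\alpha I+\bar{\alpha}S$ with $S$ affine and $C$ convex and bounded, fix $\alpha\in(0,1)$, and set $p_{n,k}=\binom{n}{k}\alpha^{n-k}\bar{\alpha}^{k}$, so that $\sum_{k=0}^{n}p_{n,k}=1$. First I would record the binomial expansion
\[
T^{n}x=\sum_{k=0}^{n}p_{n,k}\,S^{k}x,\qquad x\in C,
\]
and prove it by induction on $n$. The inductive step is exactly where affineness and convexity are used: since the weights $p_{n,k}$ are nonnegative and sum to $1$ and each $S^{k}x\in C$, the affine map $S$ may be pushed through the sum, $S\big(\sum_{k}p_{n,k}S^{k}x\big)=\sum_{k}p_{n,k}S^{k+1}x$, and Pascal's rule rearranges $\alpha T^{n}x+\bar{\alpha}S(T^{n}x)$ into $\sum_{k}p_{n+1,k}S^{k}x$. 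In particular $T^{n}x$ is a genuine convex combination of points of $C$ and so lies in $C$, and $T$ indeed maps $C$ into itself.

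Next I would differentiate consecutive iterates. Combining $T^{n+1}x=\alpha T^{n}x+\bar{\alpha}S(T^{n}x)$ with the expansion gives
\[
T^{n+1}x-T^{n}x=\bar{\alpha}\sum_{k=0}^{n}p_{n,k}\,(S^{k+1}x-S^{k}x),
\]
equivalently the coupling $\mathrm{Bin}(n+1,\bar{\alpha})=\mathrm{Bin}(n,\bar{\alpha})+\mathrm{Bernoulli}(\bar{\alpha})$. The individual increments $S^{k+1}x-S^{k}x$ need not be small, but their partial sums telescope: setting $\sigma_{k}=S^{k+1}x-x$ (with $\sigma_{-1}=0$) one has $\|\sigma_{k}\|\le \diam C=:D<\infty$ uniformly in $k$ and in $x$, because $S^{k+1}x,x\in C$. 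Summation by parts (with the convention $p_{n,n+1}=0$) therefore yields
\[
T^{n+1}x-T^{n}x=\bar{\alpha}\sum_{k=0}^{n}(p_{n,k}-p_{n,k+1})\,\sigma_{k},
\]
whence $\|T^{n+1}x-T^{n}x\|\le \bar{\alpha}D\sum_{k=0}^{n}|p_{n,k}-p_{n,k+1}|$.

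It remains to control the total variation $\sum_{k}|p_{n,k}-p_{n,k+1}|$ of the binomial mass function, and this is the crux. Since $k\mapsto p_{n,k}$ is unimodal, the sum telescopes to at most $2\max_{k}p_{n,k}$, and the peak weight obeys $\max_{k}p_{n,k}=O(1/\sqrt{n})$ by a Stirling estimate at the mode (here $\bar{\alpha}\in(0,1)$ is fixed, so the distribution is nondegenerate). Hence
\[
\sup_{x\in C}\|T^{n+1}x-T^{n}x\|\le \bar{\alpha}D\cdot\frac{C'}{\sqrt{n}}\xrightarrow[n\to\infty]{}0,
\]
which is precisely the UAR property. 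The uniformity in $x$ is automatic: after the summation by parts the only $x$-dependence sits in the factors $\|\sigma_{k}\|\le D$, and $D$ does not depend on $x$, while the weights $p_{n,k}$ are constants. I expect the sole genuine obstacle to be the $O(1/\sqrt{n})$ bound on the maximal binomial weight; the binomial expansion and the Abel-summation bookkeeping are routine once affineness has been exploited to push $S$ through convex combinations.
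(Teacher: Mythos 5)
Your proof is correct and follows essentially the same route as the paper's: both rest on the binomial expansion $T^{n}=\sum_{k}\binom{n}{k}\alpha^{n-k}\bar{\alpha}^{k}S^{k}$ and on the observation that the coefficients of $T^{n+1}-T^{n}$ form the discrete derivative of the binomial weights, whose $\ell^{1}$-norm is controlled by the modal weight $\max_{k}p_{n,k}=O(1/\sqrt{n})$ via Stirling. The only cosmetic difference is that you reach this via Abel summation and unimodality (getting $2\bar{\alpha}D\max_{k}p_{n,k}$), whereas the paper splits the signed coefficients at the mode $m=\floor{\bar{\alpha}(n+1)}$ into two convex combinations of points of $C$ and gets the bound $a_{n}\diam C$ with $a_{n}=\bar{\alpha}p_{n,m}$ — the same quantity up to a factor of $2$.
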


\begin{proof}
Let $T:C\rightarrow C$ be an $\alpha $-averaged of an affine mapping $S$.
Notice that from the affinity we get a quite compact binominal expansion:%
\begin{equation*}
T^{n}=(\alpha I+\bar{\alpha}S)^{n}=\sum_{k=0}^{n}{\binom{n}{k}}\alpha
^{(n-k)}\bar{\alpha}^{k}S^{k}.
\end{equation*}%
With the notation ${\binom{n}{n+1}}={\binom{n}{-1}}=0,$ we have%
\begin{equation*}
T(T^{n})=\alpha T^{n}+\sum_{k=0}^{n}{\binom{n}{k}}\alpha ^{(n-k)}\bar{\alpha}%
^{k+1}S^{k+1}=\alpha T^{n}+\sum_{k=0}^{n+1}{\binom{n}{k-1}}\alpha ^{(n-k+1)}%
\bar{\alpha}^{k}S^{k}.
\end{equation*}%
Then

\begin{equation*}
T^{n}-T^{n+1}=\bar{\alpha}T^{n}-\sum_{k=0}^{n+1}{\binom{n}{k-1}}\alpha
^{(n-k+1)}\bar{\alpha}^{k}S^{k}=\sum_{k=0}^{n+1}{n\brack k}S^{k},
\end{equation*}%
where
\begin{equation*}
{n\brack k}={\binom{n}{k}}\alpha ^{(n-k)}\bar{\alpha}^{k+1}-{\binom{n}{k-1}}%
\alpha ^{(n-k+1)}\bar{\alpha}^{k}.
\end{equation*}%
We get that ${n\brack k}$ is nonnegative up to $m=\floor{\bar{\alpha}(n+1)}$%
. Using the identity%
\begin{equation*}
{n\brack k}+{n\brack k+1}=-{\binom{n}{k-1}}\alpha ^{(n-k+1)}\bar{\alpha}^{k}+%
{\binom{n}{k+1}}\alpha ^{(n-k+1)}\bar{\alpha}^{k+2}
\end{equation*}%
and having in mind boundary cases, we get by telescoping,%
\begin{equation*}
\sum_{k=0}^{m}{n\brack k}={\binom{n}{m}}\alpha ^{(n-m)}\bar{\alpha}%
^{m+1}=a_{n}\geq 0.
\end{equation*}%
Also, since our coefficents are negative above $m$, we get similarly
\begin{equation*}
\sum_{k=m+1}^{n+1}\left\vert {n\brack k}\right\vert =\left\vert {%
\sum_{k=m+1}^{n+1}{n\brack k}}\right\vert =\left\vert {-{\binom{n}{m}}\alpha
^{(n-m)}\bar{\alpha}^{m+1}}\right\vert =a_{n}.
\end{equation*}%
Since $C$ is convex, it follows that for each $x\in C,$

\begin{equation*}
\sum_{k=0}^{m}\frac{{n\brack k}}{a_{n}}S^{k}x=x_{1}\in C,
\end{equation*}%
\begin{equation*}
\sum_{k=m+1}^{n+1}\frac{\left\vert {n\brack k}\right\vert }{a_{n}}%
S^{k}x=x_{2}\in C.
\end{equation*}%
Notice that
\begin{equation*}
\left\Vert {T^{n}x-T^{n+1}x}\right\Vert =a_{n}\left\Vert x{_{1}-x_{2}}%
\right\Vert \leq a_{n}\diam C.
\end{equation*}%
If $\alpha =\frac{1}{2}$ and $n$ is even, we have from the Stirling's
approximation of the central binomial coefficent
\begin{equation*}
a_{n}=\frac{1}{2^{n+1}}\binom{n}{\frac{1}{2}n}\approx \frac{1}{2^{n+1}}\frac{%
2^{n}}{\sqrt{\pi \frac{n}{2}}}\rightarrow 0.
\end{equation*}

The odd case, $n=2k-1$ gives
\begin{equation*}
a_{n}=\frac{1}{2^{n}}\frac{(2k-1)!}{k!(k-1)!}=\frac{1}{2^{n}}\frac{2k}{2k}%
\frac{(2k-1)!}{k!(k-1)!}=\frac{1}{2^{n+1}}\frac{(2k)!}{(k!)^{2}}\approx
\frac{1}{2^{2k}}\frac{2^{2k}}{\sqrt{\pi k}}\rightarrow 0.
\end{equation*}

This proves that $a_{n}$ vanishes in infinity, and since $C$ is bounded and $%
a_{n}$ does not depend on $x$, we conclude that $T$ is UAR for $\alpha =%
\frac{1}{2}$ (which turns out to be sufficient for our applications). The
case the $\alpha >\frac{1}{2}$ can also be easily proved: define $%
U:C\rightarrow C$ as $\lambda $-averaged of $S$ with $\lambda =2\alpha $.
Notice that $U$ is affine mapping and $T$ is its $1/2$-averaged, so once
again $T$ is UAR. As for the case $\alpha >\frac{1}{2}$ we recommend the
reader to compare \cite[Lemma 1]{AnIs} or \cite[Theorem 3]{HoKu}.
\end{proof}

Notice that in Lemma \ref{CFP} $D$ need not be a convex set. This helps us
to prove our final theorem. But first recall the following variant of
Bruck's theorem (see \cite[Theorem 3]{Br2}).

\begin{theorem}
\label{Bruck}Let $(C,\tau )$ be a topological space and $\mathcal{S}$ a
semigroup of mappings on $C$. Suppose that $\mathcal{S}$ is compact in the
topology of pointwise convergence and each nonempty closed $\mathcal{S}$%
-invariant subset of $C$ contains a fixed point of $\mathcal{S}$. Then there
exists in $\mathcal{S}$ a surjective mapping $r:C\rightarrow \Fix\mathcal{S}$
which is an idempotent: $r\circ r=r.$
\end{theorem}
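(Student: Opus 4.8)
The plan is to regard $\mathcal{S}$, sitting inside $C^{C}$ with the product (pointwise) topology, as a \emph{compact right topological semigroup}: composition is the operation, and right translation $\rho_{s}\colon t\mapsto t\circ s$ is continuous (pointwise limits pass through the fixed inner map), whereas left translation need not be, since the maps in $\mathcal{S}$ are not assumed continuous. I first reduce the theorem to a purely algebraic goal. For any idempotent $r\in\mathcal{S}$ one has $\Fix\mathcal{S}\subseteq\Fix r=r(C)$, so the whole statement amounts to producing an idempotent $r$ with $r(C)\subseteq\Fix\mathcal{S}$; and $r(C)\subseteq\Fix\mathcal{S}$ is equivalent to $s\circ r=r$ for every $s\in\mathcal{S}$, i.e. to $\mathcal{S}r=\{r\}$. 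Thus it suffices to find an idempotent generating a \emph{singleton} left ideal.

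Next I would extract the algebraic skeleton. Since $\mathcal{S}$ is a left ideal of itself and each $\mathcal{S}s$ is a closed left ideal (being $\rho_{s}(\mathcal{S})$), a Zorn argument---using that a chain of nonempty closed subsets of the compact space $\mathcal{S}$ has nonempty intersection---produces a minimal closed left ideal $L$. Minimality gives $L=\mathcal{S}r$ for every $r\in L$, and $L$ is a closed subsemigroup, so by the Ellis--Namakura theorem it contains an idempotent $r$; then $\ell r=\ell$ for all $\ell\in L$. The key object is $H:=rL$. One checks that $H$ is a submonoid with identity $r$, and that every $h\in H$ has a left inverse in $H$: indeed $\mathcal{S}h$ is again a closed left ideal inside $L$, hence equals $L\ni r$ by minimality, yielding $s$ with $sh=r$ and then $rsr\in H$ with $(rsr)h=r$. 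A monoid in which every element has a left inverse is a group, so $H$ is a group. Moreover $H$ leaves $V:=r(C)=\Fix r$ invariant (for $h=rk$ one has $h(v)=r(k(v))\in r(C)$) and acts on $V$ with $r$ as the identity map.

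The heart of the proof is then to collapse $V$ onto $\Fix\mathcal{S}$ using the hypothesis. Fix $v\in V$ and consider the orbit set $L(v)=\{\ell(v):\ell\in L\}$. It is nonempty ($v=r(v)\in L(v)$), it is the image $\Phi_{v}(L)$ of the compact $L$ under evaluation $\Phi_{v}\colon t\mapsto t(v)$, hence compact and therefore closed, and it is $\mathcal{S}$-invariant since $s(L(v))=(sL)(v)\subseteq L(v)$. By assumption $L(v)$ contains a point $p$ fixed by all of $\mathcal{S}$. Writing $p=\ell_{0}(v)$ and setting $h_{0}:=r\ell_{0}\in H$, we get $h_{0}(v)=r(\ell_{0}(v))=r(p)=p$; applying the group inverse $h_{0}^{-1}\in H\subseteq\mathcal{S}$, which fixes $p$, gives $v=r(v)=(h_{0}^{-1}h_{0})(v)=h_{0}^{-1}(p)=p\in\Fix\mathcal{S}$. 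Hence $V\subseteq\Fix\mathcal{S}$, so $r(C)=\Fix\mathcal{S}$ and $r\colon C\to\Fix\mathcal{S}$ is the required idempotent surjection.

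I expect the main obstacle to be the structural step rather than the closing computation: one must establish that the minimal left ideal carries a genuine \emph{group} $H=rL$ with $r$ restricting to the identity of the fixed-point set, and this must be done using only continuity of right translations, since no joint continuity of the action on $\mathcal{S}$ is available. Once $H$ and its inverses are secured, the fixed-point hypothesis finishes the argument through the single inversion $v=h_{0}^{-1}h_{0}(v)=h_{0}^{-1}(p)=p$. A minor but essential point is that $C$, and hence $\mathcal{S}$ and the space $C^{C}$, is Hausdorff (the paper's standing convention on $\tau$), so that compact sets such as $\mathcal{S}s$ and the orbit sets $L(v)$ are automatically closed; this is exactly what allows the hypothesis, which is phrased for closed invariant subsets, to be applied to $L(v)$.
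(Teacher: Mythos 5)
Your argument is correct, and it is worth noting that the paper itself gives no proof of this statement at all: Theorem \ref{Bruck} is quoted as a known variant of Bruck's theorem with a bare citation to \cite[Theorem 3]{Br2}. What you have written is therefore a genuine, self-contained proof rather than a reconstruction, and it follows the modern route through compact \emph{right} topological semigroups: minimal closed left ideal $L$ via Zorn and compactness, an idempotent $r\in L$ via Ellis--Namakura, the right-identity property $\ell r=\ell$ on $L$, the group $H=rL$, and finally the collapse of $V=r(C)=\Fix r$ onto $\Fix\mathcal{S}$ by applying the fixed-point hypothesis to the compact invariant orbit $L(v)$ and inverting inside $H$. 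I checked the individual steps ($\Fix\mathcal{S}\subseteq\Fix r=r(C)$ for any idempotent; $\mathcal{S}s=\rho_s(\mathcal{S})$ compact hence closed; $L=\mathcal{S}r$ by minimality, whence $\ell r=\ell$; left inverses in $H$ from $\mathcal{S}h=L$; $h_0(v)=p$ and $v=h_0^{-1}(p)=p$) and they all go through. Two features of your write-up deserve emphasis. First, you correctly observe that only right translations $t\mapsto t\circ s$ are pointwise continuous, so no continuity of the individual maps is needed; this matters because the paper applies the theorem (in Corollary \ref{observation}) to the pointwise closure $\mathcal{\bar S}$, whose elements need not be continuous, and Bruck's original setting is phrased for continuous maps. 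Second, you rightly flag that Hausdorffness of $C$ (the paper's standing convention on $\tau$) is what makes the compact sets $\mathcal{S}s$, $L$ and $L(v)$ closed and makes Ellis--Namakura applicable. Compared with Bruck's original argument, which reaches the minimal element by a Zorn argument on orbit closures and avoids naming the group structure, your proof is somewhat heavier on semigroup machinery but cleaner in isolating exactly where the fixed-point hypothesis enters (once, on $L(v)$) and exactly why the idempotent restricts to the identity on its range.
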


The following corollary will be sufficient for us.

\begin{corollary}
\label{observation}Let $(C,\tau )$ be a compact topological space and $%
\mathcal{S}$ a semigroup of $\tau $-continuous affine mappings on $C$.
Suppose that each nonempty closed $\mathcal{S}$-invariant subset of $C$
contains a fixed point of $\mathcal{S}$. Then there exists an affine and
surjective idempotent mapping $r:C\rightarrow \Fix\mathcal{S}$.
\end{corollary}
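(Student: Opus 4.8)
The plan is to reduce the statement to Bruck's theorem (Theorem \ref{Bruck}) by replacing $\mathcal{S}$ with its closure in the topology of pointwise convergence, that is, with the enveloping (Ellis) semigroup. Concretely, I would regard each member of $\mathcal{S}$ as a point of the product space $C^{C}$, whose product topology coincides with the topology of pointwise convergence; since $(C,\tau)$ is compact Hausdorff, Tychonoff's theorem makes $C^{C}$ compact. Let $E=\overline{\mathcal{S}}$ be the closure of $\mathcal{S}$ in $C^{C}$. Being closed in a compact space, $E$ is compact in the topology of pointwise convergence, so the first hypothesis of Theorem \ref{Bruck} holds for $E$ automatically.

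The main work is to check that $E$ is again a semigroup of affine maps to which the fixed-point hypothesis transfers. Affinity passes to pointwise limits: if $T_{i}\rightarrow p$ pointwise and each $T_{i}$ is affine, then evaluating $T_{i}(\alpha x+\beta y)=\alpha T_{i}x+\beta T_{i}y$ in the limit shows that $p$ is affine. That $E$ is closed under composition is the delicate point, and it is exactly where the $\tau$-continuity of the members of $\mathcal{S}$ is used, via the standard two-step Ellis argument. First, for fixed $s\in S$ the map $r\mapsto T_{s}\circ r$ is continuous on $C^{C}$ (evaluation at $x$ gives $T_{s}(r(x))$, continuous in $r$ precisely because $T_{s}$ is $\tau$-continuous), and it carries $\mathcal{S}$ into $\mathcal{S}$; hence it carries $E$ into $E$, so $T_{s}\circ p\in E$ for all $p\in E$. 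Second, for fixed $p\in E$ the map $r\mapsto r\circ p$ is continuous on $C^{C}$ with no continuity assumption at all (its value at $x$ is just $r$ evaluated at the point $p(x)$); by the first step it sends each $T_{s}$ into $E$, hence it sends all of $E$ into $E$, giving $q\circ p\in E$ for all $q,p\in E$. Thus $E$ is a compact right-topological semigroup of affine self-maps of $C$.

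Next I would transfer the fixed-point hypothesis. A short argument shows that the closed $\mathcal{S}$-invariant subsets of $C$ are exactly the closed $E$-invariant ones: a closed $\mathcal{S}$-invariant set $K$ absorbs pointwise limits of the $T_{s}$, so $pK\subset K$ for every $p\in E$, while the reverse inclusion is trivial since $\mathcal{S}\subset E$. Likewise $\Fix\mathcal{S}=\Fix E$, because a point fixed by all $T_{s}$ is fixed by every pointwise limit $p\in E$, and conversely $E\supset\mathcal{S}$. Consequently the assumption that every nonempty closed $\mathcal{S}$-invariant set meets $\Fix\mathcal{S}$ is precisely the statement that every nonempty closed $E$-invariant set meets $\Fix E$, which is the second hypothesis of Theorem \ref{Bruck} for $E$.

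Finally I would invoke Theorem \ref{Bruck} for $E$: it produces a surjective idempotent $r\in E$ with $r:C\rightarrow \Fix E=\Fix\mathcal{S}$. Since $r$ is a member of $E$, it is affine by the remark above, and it is therefore the affine surjective idempotent retraction sought. I expect the only genuine obstacle to be the closure-under-composition step for $E$, since this is the one place where the continuity of the original maps is essential and where one must keep in mind that the elements of $E$ themselves need not be continuous; this is consistent with the conclusion, which claims no continuity of $r$, as $r$ lies in the enveloping semigroup rather than in $\mathcal{S}$ itself.
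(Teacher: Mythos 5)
Your proposal is correct and follows essentially the same route as the paper: both pass to the closure of $\mathcal{S}$ in $C^{C}$ with the topology of pointwise convergence, verify it is a compact semigroup of affine maps via the same two-step argument (the paper phrases it with iterated net limits, you with continuity of the composition operators, but these are identical in substance), and then apply Theorem \ref{Bruck}. Your explicit check that the fixed-point hypothesis and $\Fix\mathcal{S}$ transfer to the closure is a welcome detail that the paper leaves implicit.
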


\begin{proof}
Let $\mathcal{\bar{S}}$ denote the closure of $\mathcal{S}$ in $C^{C}$ in
the $\tau $-product topology (i.e., topology of pointwise convergence with
respect to the $\tau $-topology). Notice that if $S,T\in \mathcal{\bar{S}}$,
then there exists nets $\{S_{\alpha }\}_{\alpha \in A},\{T_{\beta }\}_{\beta
\in B}$ of mappings in $\mathcal{S}$ such that $Sx=\tau $-$\lim_{\alpha
}S_{\alpha }x$ and $Tx=\tau $-$\lim_{\beta }T_{\beta }x$ for $x\in C$.
Notice that for every $\alpha $, $\{S_{\alpha }T_{\beta }\}_{\beta }\subset
\mathcal{S}$ and from $\tau $-continuity of $S_{\alpha }$,%
\begin{equation*}
\mathcal{\bar{S}}\ni \tau \text{-}\lim_{\beta }(S_{\alpha }T_{\beta
})=S_{\alpha }(\tau \text{-}\lim_{\beta }T_{\beta })=S_{\alpha }T,
\end{equation*}%
where the above limits are to be understood in the topology of $\tau $%
-pointwise convergence. Hence $\{S_{\alpha }T\}_{\alpha }\subset \mathcal{%
\bar{S}}$ and again,

\begin{equation*}
\mathcal{\bar{S}}=\mathcal{\bar{\bar{S}}}\ni \tau \text{-}\lim_{\alpha
}S_{\alpha }T=S\circ T
\end{equation*}%
which follows from the pointwise definition of the above limit. Thus $%
\mathcal{\bar{S}}$ is a semigroup. Since it is also compact as the closed
subset of the compact set $C^{C}$ (in the $\tau $-product topology) and
consists of affine mappings, we can apply Bruck's theorem to get in $%
\mathcal{\bar{S}}$ an affine idempotent $r$ from $C$ onto $\Fix\mathcal{\bar{%
S}}$ which is our desired mapping since $\Fix\mathcal{\bar{S}=}\Fix\mathcal{S%
}$.
\end{proof}

\begin{theorem}
\label{Gen}Suppose $\mathcal{S}$ is a $\tau $-properly represented semigroup
generated by averaged affine mappings on $C$, where $C$ is a convex and $%
\tau $-compact bounded subset of a Banach space. Then there exists an
affine, surjective idempotent $r:C\rightarrow \Fix\mathcal{S}.$ If,
moreover, elements of $\mathcal{S}$ are locally $\tau $-equicontinuous, then
we can pick $r$ to be also $\tau $-continuous.
\end{theorem}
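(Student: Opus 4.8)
The plan is to combine Lemma~\ref{avg_lin_asym}, Lemma~\ref{CFP} and Corollary~\ref{observation}. Since the generators of $\mathcal{S}$ are averaged affine maps on the convex bounded set $C$, Lemma~\ref{avg_lin_asym} shows that they are UAR (with respect to the norm metric). As $\mathcal{S}$ is $\tau$-properly represented --- left reversible or left amenable, with a jointly continuous action $(s,x)\mapsto T_s x$ --- Lemma~\ref{CFP} applies and gives the key consequence: every $\tau$-compact $\mathcal{S}$-invariant subset of $C$ contains a common fixed point of $\mathcal{S}$.

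Next I would check the hypotheses of Corollary~\ref{observation} on the compact space $(C,\tau)$. Each generator is $\tau$-continuous (immediate from joint continuity of the action) and affine, hence every element of $\mathcal{S}$, being a composition of generators, is $\tau$-continuous and affine. Moreover, since $C$ is $\tau$-compact, any nonempty $\tau$-closed $\mathcal{S}$-invariant set $K\subset C$ is itself $\tau$-compact, so by the previous paragraph $K\cap\Fix\mathcal{S}\neq\varnothing$. Thus all hypotheses of Corollary~\ref{observation} are met, and it delivers an affine surjective idempotent $r:C\rightarrow\Fix\mathcal{S}$, which is the first assertion.

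For the second assertion, recall that the map $r$ furnished by Corollary~\ref{observation} lies in $\overline{\mathcal{S}}$, the closure of $\mathcal{S}$ in the topology of $\tau$-pointwise convergence; so $r$ is a $\tau$-pointwise limit of a net $\{S_\alpha\}\subset\mathcal{S}$. The added hypothesis of local $\tau$-equicontinuity is precisely what turns this pointwise limit into a continuous map: I would invoke the standard fact that a $\tau$-pointwise limit of a locally $\tau$-equicontinuous net of $\tau$-continuous maps is again $\tau$-continuous. Since equicontinuity passes to the pointwise closure, in fact every element of $\overline{\mathcal{S}}$ is $\tau$-continuous, so the $r$ we obtained is $\tau$-continuous.

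I expect the continuity claim in the second part to be the main obstacle: in general $\overline{\mathcal{S}}$ may contain $\tau$-discontinuous maps, so the idempotent produced by Bruck's theorem need not be continuous, and one must exploit local equicontinuity through a neighbourhood-wise equicontinuity argument to secure continuity of the limit. The remaining care is bookkeeping of topologies: UAR is a metric (norm) property while compactness and continuity are taken in $\tau$; the two are reconciled because $\tau$-compact subsets of a Banach space are norm-bounded, which is exactly the boundedness that Lemma~\ref{CFP} and Proposition~\ref{surjectiveImplId} require.
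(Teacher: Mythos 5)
Your proposal is correct and follows essentially the same route as the paper: Lemma~\ref{avg_lin_asym} plus Lemma~\ref{CFP} plus Corollary~\ref{observation} for the first assertion, and the fact that local $\tau$-equicontinuity passes to the $\tau$-pointwise closure $\overline{\mathcal{S}}$ (which the paper verifies by an explicit neighbourhood argument with $B,U,V$) for the second. One cosmetic quibble: your closing claim that $\tau$-compact subsets of a Banach space are automatically norm-bounded is false for a general Hausdorff $\tau$, but this is harmless here since boundedness of $C$ is an explicit hypothesis of the theorem.
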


\begin{proof}
Since from Lemma \ref{avg_lin_asym} generators of $\mathcal{S}$ are UAR, the
existence of an affine, surjective idempotent $r$ follows instantly from
Lemma \ref{CFP} and Corollary \ref{observation}.

Assume now that mappings from $\mathcal{S}$ are locally $\tau $%
-equicontinuous. Pick $x_{0}\in C$, a net $\{T_{\alpha }\}_{\alpha \in
A}\subset \mathcal{S}$ and let $T$ be the limit of this net in the $\tau $%
-product topology. As earlier, let $\bar{A}$ denote the closure of $A$ in
this topology. Pick any $B,\,U\in \tau _{0}$ such that $\bar{B}\subset U$,
where $\tau _{0}$ denotes $\tau $-neighbourhoods of the origin. Then we
conclude from local $\tau $-equicontinuity that
\begin{equation*}
\exists _{V\in \tau _{x_{0}}}\forall _{\alpha \in A}\forall _{x,\,y\in V}\
T_{\alpha }x-T_{\alpha }y\in B,
\end{equation*}%
where $\tau _{x_{0}}$ denotes $\tau $-neighbourhoods of $x_{0}$. With $V$
satisfying the above condition we have $Tx-Ty\in \bar{B}\subset U$. Thus
\begin{equation*}
\forall _{U\in \tau _{0}}\exists _{V\in \tau _{x_{0}}}\forall _{x,\,y\in
V}Tx-Ty\in U
\end{equation*}%
which means that each $T\in \mathcal{\bar{S}}$ is $\tau $-continuous at any $%
x_{0}\in C$. In particular, it relates to $r$.
\end{proof}

We are now ready to state the following qualitative version of the
Markov-Kakutani theorem.

\begin{corollary}
Suppose $\mathcal{S}$ is a commutative family of $\tau $-continuous affine
mappings on $C$, where $C$ is a convex and $\tau $-compact bounded subset of
a Banach space. Then there exists an affine, surjective idempotent $%
r:C\rightarrow \Fix\mathcal{S}.$ If, moreover, elements of $\mathcal{S}$ are
locally $\tau $-equicontinuous, then there exists a ($\tau $-continuous)
retraction onto $\Fix\mathcal{S}$.
\end{corollary}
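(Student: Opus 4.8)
The plan is to deduce the statement from Theorem~\ref{Gen} by passing from the given commuting maps to their averages. Writing $\mathcal{S}=\{T_i\}$, put $\tilde T_i=\tfrac{1}{2}(I+T_i)$. Each $\tilde T_i$ is a $\tfrac{1}{2}$-averaged affine self-map of the convex set $C$; it is $\tau$-continuous, being an affine combination of the $\tau$-continuous maps $I$ and $T_i$; and $\Fix\tilde T_i=\Fix T_i$, since $\tilde T_ix=x$ is equivalent to $T_ix=x$. Moreover, affinity of the $T_i$ together with $T_iT_j=T_jT_i$ gives, by a one-line expansion, $\tilde T_i\tilde T_j=\tfrac{1}{4}(I+T_i+T_j+T_iT_j)=\tilde T_j\tilde T_i$, so the family $\{\tilde T_i\}$ commutes. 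Let $\tilde{\mathcal{S}}$ be the semigroup it generates. Then $\tilde{\mathcal{S}}$ is commutative, hence left amenable, and $\Fix\tilde{\mathcal{S}}=\bigcap_i\Fix\tilde T_i=\bigcap_i\Fix T_i=\Fix\mathcal{S}$.

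Next I would check that $\tilde{\mathcal{S}}$ is $\tau$-properly represented. Equip the abstract semigroup underlying $\tilde{\mathcal{S}}$ with the discrete topology: it is then a left amenable semitopological semigroup, and because this topology is discrete, joint continuity of the map $(s,x)\mapsto T_sx$ reduces to $\tau$-continuity of each $T_s$; this holds, since every element of $\tilde{\mathcal{S}}$ is a finite composition of the $\tau$-continuous maps $\tilde T_i$. Thus $\tilde{\mathcal{S}}$ is a $\tau$-properly represented semigroup generated by averaged affine mappings on the convex, $\tau$-compact, bounded set $C$, so Theorem~\ref{Gen} applies. Its first part produces an affine surjective idempotent $r\colon C\to\Fix\tilde{\mathcal{S}}=\Fix\mathcal{S}$, which is the map asserted in the first part of the corollary; since idempotency together with $r(C)=\Fix\mathcal{S}$ forces $r$ to fix $\Fix\mathcal{S}$ pointwise, $r$ is already a retraction onto $\Fix\mathcal{S}$.

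For the equicontinuous statement I would invoke the second part of Theorem~\ref{Gen}, whose hypothesis concerns the elements of $\tilde{\mathcal{S}}$ rather than its generators. The key is that averaging turns products into averages: expanding $\prod_j\tfrac{1}{2}(I+T_{i_j})$ and using affinity, every element of $\tilde{\mathcal{S}}$ is a finite convex combination $W=\sum_j c_jg_j$ of maps $g_j\in\mathcal{S}\cup\{I\}$. Assuming $\mathcal{S}$ (with $I$ adjoined) is locally $\tau$-equicontinuous, fix $x_0$ and a convex $\tau$-neighbourhood $U$ of the origin (available since $\tau$ is locally convex, as for the weak and weak$^{\ast}$ topologies relevant here), and pick a $\tau$-neighbourhood $\mathcal{N}$ of $x_0$ with $gx-gy\in U$ for all $x,y\in\mathcal{N}$ and all $g\in\mathcal{S}\cup\{I\}$. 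Then $Wx-Wy=\sum_j c_j(g_jx-g_jy)\in U$ by convexity of $U$, so $\tilde{\mathcal{S}}$ is locally $\tau$-equicontinuous with the same $\mathcal{N}$. Theorem~\ref{Gen} then yields a $\tau$-continuous $r$, i.e.\ a $\tau$-continuous affine retraction onto $\Fix\mathcal{S}$.

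I expect this last transfer to be the main obstacle. Equicontinuity is not preserved under composition in general, so one cannot simply propagate local equicontinuity from the generators to the whole generated semigroup; what rescues the argument is the special averaged form of the $\tilde T_i$, which expresses each element of $\tilde{\mathcal{S}}$ as a convex average of elements of $\mathcal{S}\cup\{I\}$, after which convexity of the basic $\tau$-neighbourhoods absorbs the average. Consequently the equicontinuity hypothesis must be understood as applying to the commutative semigroup $\mathcal{S}$ itself, and confirming that this reading is both natural and exactly what the convexity step consumes is the delicate point of the proof.
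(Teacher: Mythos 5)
Your proposal is correct and follows essentially the same route as the paper: replace each $T\in\mathcal{S}$ by $\tfrac12 I+\tfrac12 T$, observe that the generated semigroup is commutative, affine, $\tau$-continuous and has the same fixed point set, and apply Theorem~\ref{Gen}. The only difference is one of explicitness: you spell out two steps the paper merely asserts --- that a discretely topologized commutative semigroup of $\tau$-continuous maps is properly represented, and that local $\tau$-equicontinuity passes to the averaged semigroup via the convex-combination expansion --- and you correctly flag that the latter transfer requires reading the equicontinuity hypothesis on the semigroup generated by $\mathcal{S}$ (and convex basic $\tau$-neighbourhoods), points on which the paper is silent.
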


\begin{proof}
Construct from $\mathcal{S}$ a semigroup $\hat{\mathcal{S}}$ generated by
averaged mappings $\{\frac{1}{2}I+\frac{1}{2}T:T\in \mathcal{S}\}$ which is
obviously commutative and consists of affine $\tau $-continuous mappings.
Recall that a commutative semigroup is subsurjective on compact sets. Now it
is enough to notice that $\hat{\mathcal{S}}$ is locally $\tau $%
-equicontinuous whenever $\mathcal{S}$ is, and apply Theorem \ref{Gen}.
\end{proof}

\begin{remark}
\label{weak}Note that in the case of $w$-topology the assumption about local
$w$-equicontinuity can be changed to (strong) local equicontinuity because
every strongly continuous affine mapping is weakly continuous. Even in this
case the result seems to have been known only in strictly convex Banach
spaces (see \cite[Theorem 5.8]{Ph}).
\end{remark}

\begin{remark}
The results of this section are formulated for subsets of Banach spaces only
but similar arguments apply to the case of locally convex spaces, as well.
\end{remark}

Note, on the margin, that starting again from Lemma \ref{CFP} and basically
repeating the scheme presented in this section, we get a \textquotedblleft
firmly nonexpansive\textquotedblright\ counterpart of Theorem \ref{Gen}.

\begin{theorem}
\label{Gen2}Suppose $\mathcal{S}$ is a $\tau $-properly represented
semigroup generated by firmly nonexpansive mappings on $C$, where $C$ is a $%
\tau $-compact bounded subset of a Banach space. Then there exists in $%
\mathcal{\bar{S}}$ an idempotent mapping $r$ from $C$ onto $\Fix\mathcal{S}$%
. If, moreover, the norm is $\tau $-lower semicontinuous, then $\Fix\mathcal{%
S}$ is a nonexpansive retract of $C$.
\end{theorem}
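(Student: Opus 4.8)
The plan is to mirror the proof of Theorem \ref{Gen}, replacing the appeal to Corollary \ref{observation} by a direct application of Bruck's theorem, since firmly nonexpansive maps need not be affine. First I would observe that by Lemma \ref{firmlyImpliesUniformly} the generators of $\mathcal{S}$ are UAR, so Lemma \ref{CFP} applies: every compact $\mathcal{S}$-invariant subset of $C$ contains a fixed point of $\mathcal{S}$. Moreover, since $\mathcal{S}$ is $\tau$-properly represented, joint continuity of $(s,x)\mapsto T_s x$ yields that each $T_s$ is $\tau$-continuous, and hence so is every element of $\mathcal{S}$.

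Next I would pass to $\mathcal{\bar{S}}$, the closure of $\mathcal{S}$ in $C^{C}$ in the topology of pointwise $\tau$-convergence. The argument of Corollary \ref{observation} showing that $\mathcal{\bar{S}}$ is a semigroup uses only the $\tau$-continuity of the elements of $\mathcal{S}$ (affinity entered there solely to conclude that the limit is affine), so it carries over verbatim; $\mathcal{\bar{S}}$ is then compact, being a closed subset of the compact space $C^{C}$. Since a pointwise $\tau$-limit of maps fixing a given point again fixes that point, one checks that $\Fix\mathcal{\bar{S}}=\Fix\mathcal{S}$.

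I would then verify the hypotheses of Theorem \ref{Bruck} for $\mathcal{\bar{S}}$. It is compact in the topology of pointwise convergence. If $D\subset C$ is nonempty, closed and $\mathcal{\bar{S}}$-invariant, then $D$ is compact (being closed in the compact set $C$) and $\mathcal{S}$-invariant, so by Lemma \ref{CFP} it contains a point of $\Fix\mathcal{S}=\Fix\mathcal{\bar{S}}$. Thus Bruck's theorem produces an idempotent $r\in\mathcal{\bar{S}}$ mapping $C$ onto $\Fix\mathcal{\bar{S}}=\Fix\mathcal{S}$, which is the first assertion.

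Finally, for the refinement, suppose the norm is $\tau$-lower semicontinuous. Since $r\in\mathcal{\bar{S}}$, there is a net $\{T_{s_{\alpha}}\}\subset\mathcal{S}$ with $rx=\tau\text{-}\lim_{\alpha}T_{s_{\alpha}}x$ for every $x\in C$. Each $T_{s_{\alpha}}$ is firmly nonexpansive, hence nonexpansive. Given $x,y\in C$, the differences $T_{s_{\alpha}}x-T_{s_{\alpha}}y$ converge in $\tau$ to $rx-ry$, so $\tau$-lower semicontinuity of the norm gives $\Vert rx-ry\Vert\leq\liminf_{\alpha}\Vert T_{s_{\alpha}}x-T_{s_{\alpha}}y\Vert\leq\Vert x-y\Vert$, so that $r$ is a nonexpansive retraction onto $\Fix\mathcal{S}$. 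The step demanding the most care is this last one: one must ensure that $\tau$ interacts with the linear structure so that the difference net $\tau$-converges to $rx-ry$ (as it does for the weak and weak$^{\ast}$ topologies), which together with the lower semicontinuity hypothesis is precisely what delivers nonexpansiveness of the limit $r$, a property that pointwise $\tau$-limits do not in general inherit.
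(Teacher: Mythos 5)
Your proposal is correct and follows essentially the route the paper itself indicates (the paper gives no explicit proof, only the remark that one repeats the scheme of Section 4 starting from Lemma \ref{CFP}): Lemma \ref{firmlyImpliesUniformly} gives UAR generators, the closure argument of Corollary \ref{observation} carries over since it uses only $\tau$-continuity, Theorem \ref{Bruck} produces the idempotent $r\in\mathcal{\bar{S}}$, and $\tau$-lower semicontinuity of the norm yields nonexpansiveness of $r$ as a pointwise $\tau$-limit of nonexpansive maps. Your closing caveat about needing $\tau$ to be compatible with the linear structure is well placed and is implicitly assumed in the paper.
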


Even more generally,

\begin{theorem}
\label{Gen3}Suppose $\mathcal{S}$ is a properly represented semigroup
generated by UAR mappings on a compact metric space $M$. Then there exists
in $\mathcal{\bar{S}}$ a surjective idempotent $r:M\rightarrow \Fix\mathcal{S%
}$.
\end{theorem}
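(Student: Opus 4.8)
The plan is to follow the scheme of Corollary \ref{observation}, replacing its affineness hypothesis by the fixed-point-on-invariant-sets property that Lemma \ref{CFP} already supplies, and then to invoke Bruck's theorem (Theorem \ref{Bruck}) on the pointwise closure of $\mathcal{S}$. Concretely, I would let $\mathcal{\bar{S}}$ denote the closure of $\mathcal{S}$ in $M^{M}$ equipped with the topology of pointwise convergence. Since $M$ is compact, Tychonoff's theorem gives that $M^{M}=\prod_{x\in M}M$ is compact in this topology, so the closed subset $\mathcal{\bar{S}}$ is compact as well.

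Next I would verify that $\mathcal{\bar{S}}$ is a semigroup. The only ingredient this needs is continuity of the members of $\mathcal{S}$, which is furnished by proper representation: joint continuity of $(s,x)\mapsto T_{s}x$ forces each $T_{s}$ to be continuous. With this in hand the argument is verbatim that of Corollary \ref{observation}, namely approximating $S,T\in\mathcal{\bar{S}}$ by nets from $\mathcal{S}$ and passing to pointwise limits through the continuity of the approximants to conclude $S\circ T\in\mathcal{\bar{S}}$.

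I would then reconcile the fixed-point and invariance notions for $\mathcal{S}$ and $\mathcal{\bar{S}}$. Since $\mathcal{S}\subset\mathcal{\bar{S}}$ one has $\Fix\mathcal{\bar{S}}\subset\Fix\mathcal{S}$; the reverse inclusion holds because any point fixed by all of $\mathcal{S}$ is fixed by every pointwise limit of maps from $\mathcal{S}$, whence $\Fix\mathcal{\bar{S}}=\Fix\mathcal{S}$. An identical closure argument shows that a closed subset of $M$ is $\mathcal{S}$-invariant if and only if it is $\mathcal{\bar{S}}$-invariant. Now let $K$ be any nonempty closed $\mathcal{\bar{S}}$-invariant subset of $M$. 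Then $K$ is closed and $\mathcal{S}$-invariant, and since $M$ is compact, $K$ is compact; Lemma \ref{CFP} therefore yields a point of $K$ fixed by $\mathcal{S}$, that is, a point of $K\cap\Fix\mathcal{S}=K\cap\Fix\mathcal{\bar{S}}$. Thus every nonempty closed $\mathcal{\bar{S}}$-invariant subset of $M$ contains a fixed point of $\mathcal{\bar{S}}$.

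Finally I would apply Bruck's theorem (Theorem \ref{Bruck}) to the compact semigroup $\mathcal{\bar{S}}$ acting on $M$: its two hypotheses, compactness in the topology of pointwise convergence and the fixed-point property on closed invariant sets, have just been established. Bruck's theorem then produces a surjective idempotent $r\in\mathcal{\bar{S}}$ with $r:M\rightarrow\Fix\mathcal{\bar{S}}=\Fix\mathcal{S}$, which is exactly the asserted mapping. I expect the main obstacle to be the verification that $\mathcal{\bar{S}}$ is a semigroup, since this is the one step where the proper-representation hypothesis is genuinely used (via continuity of the $T_{s}$); the remainder is a direct transcription of Lemma \ref{CFP} and Theorem \ref{Bruck}.
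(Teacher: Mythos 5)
Your proposal is correct and follows essentially the same route the paper intends for Theorem \ref{Gen3}: the paper explicitly says the result is obtained by ``starting again from Lemma \ref{CFP} and basically repeating the scheme presented in this section,'' i.e.\ forming the pointwise closure $\mathcal{\bar{S}}$, checking it is a compact semigroup, transferring the fixed-point-on-closed-invariant-sets property via Lemma \ref{CFP}, and applying Theorem \ref{Bruck}. Your additional care in verifying $\Fix\mathcal{\bar{S}}=\Fix\mathcal{S}$ and the equivalence of $\mathcal{S}$- and $\mathcal{\bar{S}}$-invariance for closed sets is exactly what the paper's sketch leaves implicit.
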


\end{document}